\def\<{\langle}
\def\>{\rangle}
\newtheorem{thm}{Theorem}
\newtheorem{theorem}{Theorem}[section]
\newtheorem{lemma}[theorem]{Lemma}
\newtheorem{corollary}[theorem]{Corollary}
\newtheorem{proposition}[theorem]{Proposition}
\theoremstyle{definition}
\theoremstyle{remark}
\newtheorem{remark}[theorem]{Remark}
\numberwithin{equation}{section}
\begin{document}

\title[Willmore-type inequalities]
{Willmore-type inequalities for closed hypersurfaces\\
in weighted manifolds}

\author{Guoqiang Wu}
\address{School of Science, Zhejiang Sci-Tech University, Hangzhou 310018, China}
\email{gqwu@zstu.edu.cn}

\author{Jia-Yong Wu}
\address{Department of Mathematics and Newtouch Center for Mathematics, Shanghai University, Shanghai 200444, China}
\email{wujiayong@shu.edu.cn}

\subjclass[2010]{Primary 53C40; Secondary 53C24}
\dedicatory{}
\date{\today}
\keywords{Willmore inequality; isoperimetric inequality; mean curvature;
hypersurface; Bakry-\'Emery Ricci curvature; gradient Ricci soliton}

\begin{abstract}
In this paper, we prove some Willmore-type inequalities for closed hypersurfaces in
weighted manifolds with nonnegative Bakry-\'Emery Ricci curvature. In particular,
we give a sharp Willmore type inequality in steady gradient Ricci solitons.
We also prove a sharp Willmore-like inequality in shrinking gradient Ricci solitons.
Moreover, we characterize the equality cases of Willmore-type inequalities. These
results can be regarded as weighted versions of Agostiniani-Fogagnolo-Mazzieri's
Willmore-type inequality. As applications, we derive some sharp isoperimetric type
inequalities in weighted manifolds under the existence assumption of a critical set
of weighted isoperimetric functional.
\end{abstract}
\maketitle

\section{Introduction}\label{Int1}

The classical Willmore inequality \cite{Wil} states that any a bounded domain
$\Omega$ in $3$-dimensional Euclidean space $\mathbb{R}^3$ with smooth
boundary $\partial\Omega$ satisfies
\[
\int_{\partial\Omega}
\left(\frac{H}{2}\right)^2d\sigma\ge4\pi,
\]
where $H$ is the mean curvature of $\partial\Omega$ and $d\sigma$ is the
Riemannian volume element of $\partial\Omega$ induced by the standard
Euclidean metric. Moreover, the equality occurs if and only if $\Omega$
is a $3$-dimensional round ball. The  classical Willmore inequality was extended
by Chen \cite{Chen,Chen2} to submanifolds of any co-dimension in $n$-dimensional
($n\ge3$) Euclidean space $\mathbb{R}^n$. In particular, one has that any
a bounded domain $\Omega$ in $\mathbb{R}^n$ with smooth boundary $\partial\Omega$
must have
\[
\int_{\partial\Omega} \left|\frac{H}{n-1}\right|^{n-1}
d\sigma\geq|\mathbb{S}^{n-1}|,
\]
where $|\mathbb{S}^{n-1}|$ is the area of $(n-1)$-dimensional Euclidean
unit sphere $\mathbb{S}^{n-1}$. Moreover, the equality holds if and only
if $\Omega$ is an $n$-dimensional round ball. It is worth pointing out that the
Willmore inequality can be reproved by Agostiniani and Mazzieri \cite{AM2}
via a monotonicity formula approach in the potential theory. Furthermore,
they gave a sharp quantitative version of the classical Willmore-type
inequality. See also an alternative proof of the Willmore inequality via
a geometric divergence inequality exploited by Cederbaum and Miehe \cite{CM}.
In \cite{AFM}, Agostiniani, Fogagnolo and Mazzieri generalized the Willmore
inequality to a bounded and open domain in Riemannian manifolds with
nonnegative Ricci curvature. Their result is stated as follows.

\begin{thm}[\cite{AFM}]\label{ACthm}
Let $(M^n,g)$ ($n\geq3$) be a complete noncompact Riemannian manifold
with nonnegative Ricci curvature and Euclidean volume growth. If
$\Omega\subset M^n$ is a bounded and open subset with smooth boundary
$\partial\Omega$, then
\begin{equation}\label{Willineq}
\int_{\partial\Omega}\left|\frac{H}{n-1}\right|^{n-1}d\sigma
\ge\mathrm{AVR}(g)|\mathbb{S}^{n-1}|,
\end{equation}
where $\mathrm{AVR}(g)$ is the asymptotic volume ratio of $(M^n,g)$.
Moreover, the equality holds if and only if $(M^n\backslash\Omega,g)$ is isometric to
\[
\left([r_0,\infty)\times\partial\Omega,dr^2+(\tfrac{r}{r_0})^2g_{\partial\Omega}\right),
\]
with $r_0=\left(\frac{|\partial\Omega|}{\mathrm{AVR}(g)|\mathbb{S}^{n-1}|}\right)^{\frac{1}{n-1}}$.
In particular, $\partial\Omega$ is a connected totally umbilic submanifold
with constant mean curvature.
\end{thm}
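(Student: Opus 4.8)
The plan is to adopt the nonlinear potential-theoretic scheme that underlies Theorem~\ref{ACthm}: replace $\partial\Omega$ by the level sets of a family of $p$-harmonic capacitary potentials, prove a monotonicity formula along those level sets, and pass to the limit $p\to1^{+}$ to recover the mean-curvature integral. For each $p\in(1,n)$ I would first solve
\[
\mathrm{div}\!\left(|\nabla u|^{p-2}\nabla u\right)=0 \ \ \text{in } M\setminus\Omega,\qquad u=1 \ \text{on } \partial\Omega,\qquad u(x)\to0 \ \text{as } d(x,\partial\Omega)\to\infty,
\]
and establish its solvability, $C^{1,\alpha}_{\mathrm{loc}}$ regularity, and precise decay at infinity. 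The hypotheses enter here: with $\mathrm{Ric}\ge0$ and Euclidean volume growth the manifold opens up like a metric cone of aperture $\mathrm{AVR}(g)$, and Bishop--Gromov comparison pins down the leading asymptotics of $u$ and of $|\nabla u|$ in terms of $\mathrm{AVR}(g)$.

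The heart of the argument is a monotonicity formula. I would track a scale-invariant level-set functional of the schematic form
\[
U_p(t)=t^{-\frac{n-1}{n-p}}\int_{\{u=t\}}|\nabla u|^{p}\,d\sigma,\qquad t\in(0,1],
\]
normalized precisely so that it is \emph{constant} on the model exterior $\mathbb{R}^{n}\setminus B_{r_0}$ and whose $p\to1^{+}$ limit reduces to the Willmore functional $\int_{\{u=t\}}|H/(n-1)|^{n-1}$ of the level sets. To prove $U_p$ monotone I would start from the Bochner formula
\[
\tfrac12\Delta|\nabla u|^{2}=|\nabla^{2}u|^{2}+\langle\nabla u,\nabla\Delta u\rangle+\mathrm{Ric}(\nabla u,\nabla u),
\]
insert the $p$-harmonic equation to rewrite $\langle\nabla u,\nabla\Delta u\rangle$, invoke a refined Kato inequality for $|\nabla u|$, and discard the nonnegative term $\mathrm{Ric}(\nabla u,\nabla u)\ge0$. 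This produces a differential inequality for $|\nabla u|$ which, integrated over level sets by the coarea formula and combined with the divergence-free structure of the flux $|\nabla u|^{p-2}\nabla u$, yields the required sign of $U_p'$.

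With monotonicity established I would compare the two endpoints. As $t\to0^{+}$ the first step identifies $\lim_{t\to0}U_p(t)$ with a multiple of $\mathrm{AVR}(g)\,|\mathbb{S}^{n-1}|$, while at $t=1$ the level set is $\partial\Omega$ itself; using the level-set identity
\[
\nabla^{2}u(\nu,\nu)=-\frac{H}{p-1}\,|\nabla u|,\qquad \nu=\frac{\nabla u}{|\nabla u|},
\]
the boundary endpoint is controlled by the mean curvature of $\partial\Omega$. Chaining the monotonicity as $U_p(1)\ge\lim_{t\to0^{+}}U_p(t)$ and finally letting $p\to1^{+}$—so that the boundary term becomes $\int_{\partial\Omega}|H/(n-1)|^{n-1}\,d\sigma$—delivers \eqref{Willineq}.

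I expect the genuine difficulties to be threefold. First, the $p$-Laplacian is degenerate: critical points of $u$ and the merely $C^{1,\alpha}$ regularity must be handled, e.g.\ by restricting to regular values of $u$ via Sard's theorem and an approximation argument. Second, extracting the \emph{sharp} constant $\mathrm{AVR}(g)$ from the behaviour at infinity is the delicate step that truly uses Euclidean volume growth. Third, and hardest, is the rigidity: assuming equality in \eqref{Willineq} forces $U_p$ to be constant, which forces equality in both the Bochner and the Kato inequalities and $\mathrm{Ric}(\nabla u,\nabla u)\equiv0$ along the flow; this makes $\nabla^{2}u$ proportional to the induced metric on each level set, so that every $\{u=t\}$ is totally umbilic with constant mean curvature and $M\setminus\Omega$ is isometric to the truncated cone in the statement, with $r_0$ determined by the normalization $|\partial\Omega|=\mathrm{AVR}(g)|\mathbb{S}^{n-1}|r_0^{n-1}$. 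Carrying the umbilicity through the $p\to1^{+}$ limit and assembling the warped-product isometry is where I anticipate the most care will be needed.
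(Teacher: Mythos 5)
Your proposal is, in essence, a reconstruction of the original Agostiniani--Fogagnolo--Mazzieri argument of \cite{AFM} (nonlinear potential theory: $p$-capacitary potentials, monotonicity along their level sets, limit $p\to 1^{+}$), and that is a genuinely different route from the one this paper takes. The paper never proves Theorem~\ref{ACthm} by that method; it cites it, and its own machinery is Wang's short comparison-geometry proof \cite{Wa}, here developed in weighted form: for each $p\in\partial\Omega$ one runs the Bochner/Riccati inequality along the normal geodesic $\exp_p(r\nu(p))$ with initial condition $\Delta r|_{r=0}=H(p)$, concludes that $\theta(r,p)=\mathcal{A}(r,p)\bigl(1+\tfrac{H(p)}{n-1}r\bigr)^{-(n-1)}$ is non-increasing up to the cut time $\tau(p)$, integrates in $r$ and over $\partial\Omega$, divides by $|\mathbb{B}^n|R^n$ and lets $R\to\infty$ to capture the asymptotic volume ratio; equality forces $\mathrm{Hess}\,r=\frac{H}{n-1+Hr}\,g$ and hence the conical splitting. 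In this paper that argument appears as Lemmas~\ref{lem} and \ref{lemv2} and the proofs of Theorems~\ref{Ricf2} and \ref{Ricmf}; Theorem~\ref{Ricmf} with $f$ constant and $m=n$ is exactly Theorem~\ref{ACthm}. The trade-off: the comparison route needs no PDE existence, regularity, or asymptotic analysis and no $p\to1$ limit, which is precisely why it transplants to weighted manifolds (the point of this paper); your route is much heavier analytically but yields the monotonicity formulas themselves and quantitative/effective refinements that the ODE comparison does not give.

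One concrete weakness in your sketch: the boundary endpoint is handled incorrectly as stated. The quantity $U_p(1)=\int_{\partial\Omega}|\nabla u_p|^{p}\,d\sigma$ is not the Willmore energy, and it does not converge to $\int_{\partial\Omega}|H/(n-1)|^{n-1}d\sigma$ as $p\to1^{+}$: on $\partial\Omega$ the gradient $|\nabla u_p|$ is determined by the exterior PDE, not pointwise by the geometry of $\partial\Omega$, and $H$ may vanish or change sign, so no inverse-mean-curvature-flow-type identification of $|\nabla u_p|$ with $H$ is available. In \cite{AM2,AFM} the mean curvature enters through the \emph{derivative} of the monotone quantity at the boundary level --- via exactly your identity $\nabla^{2}u(\nu,\nu)=-\tfrac{H}{p-1}|\nabla u|$ --- combined with a H\"older inequality whose exponents are matched so that the $|\nabla u_p|$ factors cancel between the two sides of the resulting inequality; only after that cancellation does letting $p\to1^{+}$ produce the sharp exponent $n-1$ and the constant $\mathrm{AVR}(g)|\mathbb{S}^{n-1}|$. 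Without this step, chaining $U_p(1)\ge\lim_{t\to0^{+}}U_p(t)$ bounds the wrong quantity, and the proposal as written does not close.
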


The proof idea of Theorem \ref{ACthm} in \cite{AFM} is similar to
\cite{AM2}, which is based on the monotonicity-rigidity properties
on certain level set flow of the electrostatic potentials associated
with $\Omega$. Later, Wang \cite{Wa} gave a short proof by using standard
comparison methods of the Riccati equation in Riemannian geometry.
Motivated by Wang's argument, Rudnik \cite{Ru} applied comparison methods
of Jacobi equations to obtain a Willmore-type inequality in manifolds with
asymptotically nonnegative curvature. In \cite{BF}, Borghini and Fogagnolo
proved a Willmore-like inequality in substatic manifolds by exploiting the
substatic Bishop–Gromov monotonicity theorem. Recently, Jin and Yin \cite{JY}
used Wang's method to extend Theorem \ref{ACthm} to the manifold of negative
Ricci curvature. Besides, many Willmore-type inequalities in various ambient
spaces are explored in \cite{AM1,Hu,Sc,V} and references therein.

In this paper, using Wang's argument \cite{Wa}, we shall prove Willmore-type
inequalities for closed hypersurfaces in weighted manifolds with nonnegative
Bakry-\'Emery Ricci curvature, extending Theorem \ref{ACthm} to weighted
manifolds. Our results imply a sharp Willmore-type inequality in steady
gradient Ricci solitons. We also prove a sharp Willmore-like inequality in
shrinking gradient Ricci solitons. Moreover, we characterize the equality
cases of Willmore-type inequalities. As applications, we give some sharp
isoperimetric type inequalities in weighted manifolds admitting a critical
set of weighted isoperimetric functional.

Recall that a weighted $n$-manifold, denoted by $(M^n,g,e^{-f}dv)$, is an
$n$-dimensional complete Riemannian manifold $(M^n,g)$ coupled with a weighted
measure $e^{-f}dv$ for some smooth weight function $f$ and the Riemannian volume
element $dv$ on $(M^n,g)$. Weighted manifolds are natural extensions of Riemannian
manifolds and are characterized by collapsed measured Gromov-Hausdorff limits
\cite{Lo}. They are also closely related to the Ricci flow, probability theory
and optimal transport; see \cite{Lo,LV} and references therein.

On $(M^n,g,e^{-f}dv_g)$, Bakry and \'Emery \cite{BE} introduced the
$m$-Bakry-\'Emery Ricci curvature
\[
\mathrm{Ric}_f^m:=\mathrm{Ric}+\mathrm{Hess}\,f-\frac{df\otimes df}{m-n}
\]
for some real number $m\ge n$, where $\mathrm{Ric}$ is the Ricci tensor of
$(M^n,g)$ and $\mathrm{Hess}$ is the Hessian with respect to $g$. When $m=n$,
function $f$ should be regard as a constant and $\mathrm{Ric}_f^m$ returns
to the ordinary Ricci curvature. When $m<\infty$, there exists a basic
viewpoint that many geometric results for manifolds with Ricci tensor
bounded below can be possibly extended to weighted manifolds with
$m$-Bakry-\'Emery Ricci curvature bounded below. This is because the
Bochner formula for $\mathrm{Ric}_f^m$ can be regarded as the classical
Bochner formula for $\mathrm{Ric}$  of an $m$-manifold; see for example
\cite{WW} for detailed explanations. In particular, a weighted manifold
satisfying
\[
\mathrm{Ric}_f^m=\lambda g
\]
for some $\lambda\in\mathbb{R}$, is called a quasi-Einstein $m$-manifold
(see \cite{CSW}), which is considered as the generalization of Einstein
manifold. When $n<m<\infty$, $(M^n\times F^{m-n}, g_M+e^{\frac{-2f}{m-n}}g_F)$,
with $(F^{m-n},g_F)$ an Einstein manifold, is a warped product Einstein
manifold. When $m=n+1$, $(M^n, g,e^{-f}dv)$ is the so-called static Einstein
manifold in general relativity. When $m\to\infty$, $\mathrm{Ric}_f^m$ becomes
the $\infty$-Bakry-\'Emery Ricci curvature
\[
\mathrm{Ric}_f:=\lim_{m\to\infty}\mathrm{Ric}_f^m=\mathrm{Ric}+\mathrm{Hess}\,f.
\]
Similar to classical comparison theorems, when $\mathrm{Ric}_f$ is bounded below,
one can also prove weighted comparison theorems (see \cite{WW,WWu2}), which highly
rely on $f$. This leads to many classical geometric and topological results
remaining true in weighted manifolds under certain assumptions on $f$. We refer
the readers to \cite{FLZ,FW,MuWa,WW,Wu,Wu1,Wu2,Wu3,WWu1,WWu2} and references therein for
nice works in this direction. In particular, if
\[
\mathrm{Ric}_f=\lambda\, g
\]
for some $\lambda\in\mathbb{R}$, then $(M^n, g, e^{-f}dv_g)$ is a gradient Ricci
soliton. A gradient Ricci soliton is called shrinking, steady, or expanding,
if $\lambda>0$, $\lambda=0$, or $\lambda<0$, respectively. The gradient Ricci
soliton plays an important role in the Ricci flow and Perelman's resolution
of the Poincar\'e conjecture; see \cite{Cao,Ham} and references therein for
nice surveys.

On $(M^n,g,e^{-f}dv)$, a natural generalization of Laplacian $\Delta$
is the $f$-Laplacian defined by
\[
\Delta_f:=\Delta-\langle\nabla f,\nabla\rangle,
\]
where $\nabla$ is the the gradient operator on $(M^n,g)$. The $f$-Laplacian
is self-adjoint with respect to the measure $e^{-f}dv$, and it is related
to $\mathrm{Ric}_f$ via the generalized Bochner formula
\[
\Delta_f|\nabla u|^2=2|\mathrm{Hess}\,u|^2+2\langle\nabla\Delta_f u, \nabla u\rangle+2\mathrm{Ric}_f(\nabla u, \nabla u)
\]
for any $u\in C^\infty(M^n)$. This formula will play an important role in our
paper. For a point $p\in M^n$, in this paper we let $r(x):=d(x,p)$ be a
distance function from $p$ to $x\in M^n$. In geodesic polar coordinates at $p$,
let $\nabla r=\partial r$ and $|\nabla r|=1$ almost everywhere. Let $B(p,r)$ be
the geodesic ball with center $p\in M^n$ and radius $r>0$, and its weighted volume
is given by $|B(p,r)|_f:=\int_{B(p,r)}e^{-f}dv$. Sometimes we denote $|B(p,r)|_f$
by $\mathrm{Vol}_f(B(p,r))$.

On $(M^n,g,e^{-f}dv_g)$, let $\Omega\subset M^n$ be a bounded open set with
smooth boundary $\partial\Omega$. We denote by $|\Omega|_f$ the weighted
volume of $\Omega$. When $f=0$, it returns to the Riemannian volume of
$\Omega$, denoted by $|\Omega|$. Let $H$ be the mean curvature of
$\partial\Omega$ and $\nu$ be the unit outer normal vector of $\partial\Omega$.
The weighted mean curvature of $\partial\Omega$ is defined by
\[
H_f:=H-g(\nabla f,\nu).
\]
When $\mathrm{Ric}_f\ge 0$ and $H_f$ of $\partial\Omega$ is nonnegative
everywhere, for each point $p\in \partial\Omega$, if
\begin{equation}\label{assump2}
\partial_rf\ge -a
\end{equation}
for some constant $a\ge0$ along all minimal geodesic segments from
$p$, inspired by \cite{JY}, we define the weighted relative volume
\[
\mathrm{RV}_f(\Omega):=\lim_{r\to\infty}\frac{\mathrm{Vol}_f\{x\in M|d(x,\Omega)<r\}}{m(r)},
\]
where $m(r):=n |\mathbb{B}^n|\int^r_0e^{at}t^{n-1}dt$ and $|\mathbb{B}^n|$
is the volume of Euclidean unit $n$-ball. By Proposition \ref{welldef}(a),
$\mathrm{RV}_f(\Omega)$ is well defined. For each point $p\in \partial\Omega$,
if
\begin{equation}\label{assump2b}
|f|\le k
\end{equation}
for some constant $k\ge0$ along all minimal geodesic segments from
$p$, we can define another weighted relative volume
\[
\overline{\mathrm{RV}}_f(\Omega):=\lim_{r\to\infty}\frac{\mathrm{Vol}_f\{x\in M|d(x,\Omega)<r\}}
{|\mathbb{B}^{n+4k}|r^{n+4k}}.
\]
By Proposition \ref{welldef}(b), $\overline{\mathrm{RV}}_f(\Omega)$
is also well defined. In general, $\mathrm{RV}_f(\Omega)$ and
$\overline{\mathrm{RV}}_f(\Omega)$ depend on the set $\Omega$, which are
different from the classical asymptotic volume ratio. Notice that our
assumptions on function $f$ focus on $\partial\Omega$ rather than the
whole manifold $M$.

By means of two weighted relative volumes above, we first state Willmore-type
inequalities in weighted manifolds with $\mathrm{Ric}_f\ge 0$.
\begin{theorem}\label{Ricf2}
Let $(M^n,g,e^{-f}dv)$ be a complete noncompact weighted $n$-manifold
with $\mathrm{Ric}_f\ge 0$, and let $\Omega\subset M^n$ be a bounded
open set with smooth boundary $\partial\Omega$. The weighted
mean curvature $H_f$ of $\partial\Omega$ is nonnegative everywhere.

(a) If \eqref{assump2} holds, then
\begin{equation}\label{anWill2}
\int_{\partial\Omega}\left(\frac{H_f}{n-1}\right)^{n-1}e^{-f}d\sigma
\ge\mathrm{RV}_f(\Omega) |\mathbb{S}^{n-1}|.
\end{equation}
Moreover, if $\Omega$ is connected, $\mathrm{RV}_f(\Omega)>0$ and $H_f$ is constant on
$\partial\Omega$, then the equality of \eqref{anWill2} holds if and only
if $\partial\Omega$ is connected and $(M^n\backslash\Omega,g, e^{-f}dv)$ is
isometric to
\begin{equation}\label{anwarprod}
\left([r_0,\infty)\times\partial\Omega,dr^2+(\tfrac{r}{r_0})^2g_{\partial\Omega}\right)
\end{equation}
with $\partial_rf\equiv 0$ along all minimal geodesic segments
from $\partial\Omega$, where $r_0=\left(\frac{|\partial\Omega|_f}{\mathrm{RV}_f(\Omega)|\mathbb{S}^{n-1}|}\right)^{\frac{1}{n-1}}$.

(b) If \eqref{assump2b} holds, then
\begin{equation}\label{anWill2b}
\int_{\partial\Omega}\left(\frac{H_f}{n-1}\right)^{n-1+4k}e^{-f}d\sigma
\ge\overline{\mathrm{RV}}_f(\Omega) |\mathbb{S}^{n-1+4k}|.
\end{equation}
Moreover, if $\Omega$ is connected, $\overline{\mathrm{RV}}_f(\Omega)>0$ and $H_f$ is constant on
$\partial\Omega$, then the equality of \eqref{anWill2b} holds if and only if
$\partial\Omega$ is connected and $(M^n\backslash\Omega,g, e^{-f}dv)$ is
isometric to
\begin{equation}\label{anwarprodb}
\left([r_1,\infty)\times\partial\Omega,dr^2+(\tfrac{r}{r_1})^2g_{\partial\Omega}\right)
\end{equation}
with $ f\equiv 0$ along all minimal geodesic segments from
$\partial\Omega$, where $r_1=\left(\frac{|\partial\Omega|_f}{\overline{\mathrm{RV}}_f(\Omega)|\mathbb{S}^{n-1}|}\right)^{\frac{1}{n-1}}$.
\end{theorem}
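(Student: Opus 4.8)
\emph{Strategy.} The plan is to run Wang's normal--exponential comparison in the weighted setting. Since $\Omega$ is bounded and $M$ is complete, every point of $M\setminus\overline{\Omega}$ is joined to $\partial\Omega$ by a minimizing geodesic meeting $\partial\Omega$ orthogonally, so the outward normal exponential map $\Phi(x,t)=\exp_x(t\nu(x))$, $x\in\partial\Omega$, $t\in[0,c(x))$, covers $M\setminus\Omega$ up to a null set, where $c(x)$ is the cut distance. Writing the Riemannian volume element as $dv=A(x,t)\,dt\,d\sigma$ along the level sets $\{d(\cdot,\Omega)=t\}$, the weighted Jacobian $\xi(x,t):=e^{-f(\Phi(x,t))}A(x,t)$ satisfies $\xi(x,0)=e^{-f(x)}$ and $\partial_t\log\xi=H-\partial_r f=H_f$, where $H=H(x,t)$ is the mean curvature of the level set. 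I would reduce both inequalities to an \emph{upper} bound on $\mathrm{Vol}_f(\{d(\cdot,\Omega)<r\})=|\Omega|_f+\int_{\partial\Omega}\int_0^{\min(r,c(x))}\xi\,dt\,d\sigma$; because only an upper bound is needed, the cut locus is harmless (one may integrate the comparison bound over all of $\partial\Omega\times[0,r]$, which only increases the right-hand side).

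\emph{The comparison estimate.} Along each normal geodesic the Bochner formula gives the Riccati inequality $H'\le-|\mathrm{II}|^2-\mathrm{Ric}(\partial_r,\partial_r)$; subtracting $\partial_r^2 f$ and using $|\mathrm{II}|^2\ge H^2/(n-1)$ together with $\mathrm{Ric}_f(\partial_r,\partial_r)\ge0$ yields the clean inequality
\[
H_f'(x,t)\le-\tfrac{H^2}{n-1}-\mathrm{Ric}_f(\partial_r,\partial_r)\le-\tfrac{H^2}{n-1}\le 0 .
\]
For part (a) I use $H=H_f+\partial_r f\ge H_f-a$ to integrate this to a pointwise bound of the shape $\xi(x,t)\le e^{-f(x)}e^{at}\big(\tfrac{H_f(x)\,t}{n-1}\big)^{n-1}(1+o(1))$ as $t\to\infty$; points with $H_f(x)\le a$ contribute only $O(e^{ar})$, negligible against $\int_0^r e^{at}t^{n-1}\,dt$. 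Integrating over $\partial\Omega\times[0,r]$, dividing by $m(r)=|\mathbb{S}^{n-1}|\int_0^r e^{at}t^{n-1}dt$ and letting $r\to\infty$ gives \eqref{anWill2}. Part (b) is identical in structure but invokes the Wei--Wylie-type weighted volume comparison for the hypothesis $|f|\le k$, which produces the effective dimension $n+4k$: the corresponding Jacobian bound $\xi(x,t)\le e^{-f(x)}\big(\tfrac{H_f(x)}{n-1}\big)^{n-1+4k}t^{n-1+4k}(1+o(1))$, integrated and divided by $|\mathbb{B}^{n+4k}|r^{n+4k}$, yields \eqref{anWill2b}.

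\emph{Equality.} For the rigidity I trace back every inequality. First, the extra assumption $\mathrm{RV}_f(\Omega)>0$ (resp.\ $\overline{\mathrm{RV}}_f(\Omega)>0$) forces the effective growth to be genuinely of degree $n$, hence $a=0$ (resp.\ $k=0$, so $f\equiv0$ along the geodesics); otherwise the relative volume would vanish. With $a=0$ the estimate degenerates to $H_f'\le-H_f^2/(n-1)$ with $H\ge H_f\ge0$, and equality throughout forces: the level sets are totally umbilic ($\mathrm{II}=\tfrac{H}{n-1}g$), $\mathrm{Ric}_f(\partial_r,\partial_r)\equiv0$, $\partial_r f\equiv0$, and there is no cut locus, so $\Phi$ is a diffeomorphism $\partial\Omega\times[0,\infty)\to M\setminus\Omega$. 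Solving the extremal ODE $H_f'=-H_f^2/(n-1)$ with the constant boundary value $H_f\equiv H_0$ gives $H_f(t)=\tfrac{n-1}{t+r_0}$, $r_0=(n-1)/H_0$, and umbilicity identifies $(M\setminus\Omega,g)$ with the cone $([r_0,\infty)\times\partial\Omega,\,dr^2+(r/r_0)^2 g_{\partial\Omega})$, while $\partial_r f\equiv0$ (resp.\ $f\equiv0$) gives the stated weight condition. Connectedness of $\partial\Omega$ follows from that of $\Omega$ together with the product structure, and computing $\mathrm{RV}_f(\Omega)$ on the cone recovers $r_0=(|\partial\Omega|_f/(\mathrm{RV}_f(\Omega)|\mathbb{S}^{n-1}|))^{1/(n-1)}$. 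The converse is a direct computation on the cone, where $H_f=H=(n-1)/r$ and the two sides of \eqref{anWill2} (resp.\ \eqref{anWill2b}) coincide.

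\emph{Main obstacle.} The principal difficulty is the rigidity: showing that equality forces the \emph{global} absence of a cut locus and the exact cone structure, while simultaneously pinning down $f$, requires propagating the equality in the weighted Riccati comparison along every normal geodesic and combining it with the constancy of $H_f$ and the positivity of the relative volume. In part (b) the sharp weighted Jacobian estimate with exponent $n-1+4k$ but denominator $n-1$ is the key quantitative point, and verifying that $\overline{\mathrm{RV}}_f(\Omega)>0$ is incompatible with $k>0$ unless $f\equiv0$ is the subtle step that reduces the rigidity to the unweighted case.
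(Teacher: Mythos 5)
Your proof of the two inequalities follows the paper's route (normal exponential map, weighted Jacobian, division by $m(r)$ resp.\ $|\mathbb{B}^{n+4k}|r^{n+4k}$, and L'Hopital), and that part is essentially sound: for (a) your Riccati argument with $H\ge H_f-a$ yields the slightly weaker Jacobian bound $e^{-f(p)}e^{at}\bigl(1+\tfrac{(H_f(p)-a)\,t}{n-1}\bigr)^{n-1}$ rather than the paper's bound with $H_f(p)$ itself (Lemma \ref{lem}(a), obtained via the Wei--Wylie integration-by-parts trick), but since $(H_f-a)^{n-1}\le H_f^{n-1}$ on $\{H_f\ge a\}$ and the set $\{H_f<a\}$ contributes negligibly, \eqref{anWill2} still follows. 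For (b), however, you invoke the ``Wei--Wylie-type comparison'' as a black box; the statement actually needed is the hypersurface version with initial data $H_f(p)$ and the precise form $\bigl(1+\tfrac{H_f(p)}{n-1}r\bigr)^{n-1+4k}$, i.e.\ the paper's Lemma \ref{lem}(b), which is not literally Wei--Wylie's point comparison and has to be proved, so that key estimate is assumed rather than derived.

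The genuine gaps are in the rigidity. First, your opening step --- ``$\mathrm{RV}_f(\Omega)>0$ forces $a=0$, otherwise the relative volume would vanish'' (and likewise $k=0$ in (b)) --- is an unproven global assertion. The normalization $m(r)\sim e^{ar}r^{n-1}$ already discounts the exponential factor, so positivity of $\mathrm{RV}_f(\Omega)$ does not obviously force $a=0$. What is true, and what the paper proves, is that \emph{equality} in \eqref{anWill2} forces $\partial_rf\equiv0$: equality of the Jacobian with its bound forces equality in \eqref{compdic}, whose discarded term $-(n-1)^2a/[n-1+H_f(p)r]^2$ must then vanish, and then equality in \eqref{meanva} forces $f'\equiv0$. (Your weaker bound can also be made to yield this, since equality would force $c^{\,n-1}=(c-a)^{n-1}$ for the constant $c=H_f>0$, but you must make that argument; as written it is a claim, not a proof.) Second, and more seriously, connectedness of $\partial\Omega$ does \emph{not} ``follow from that of $\Omega$ together with the product structure'': a connected $\Omega$ can have disconnected boundary (an annulus), and nothing in the cone structure of the ends prevents two boundary components, each generating its own cone. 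The paper closes this with Proposition \ref{num} (a weighted Ichida--Kasue/Hang--Wang theorem proved via the generalized Reilly formula \eqref{reif}): a compact connected weighted manifold with $\mathrm{Ric}_f\ge0$ and $H_f\ge0$ on the boundary has at most two boundary components, and if it has two, the boundary is totally geodesic and the manifold is a product; in that case $H\equiv0$ together with $\partial_rf\equiv0$ gives $H_f\equiv0$, contradicting $\mathrm{RV}_f(\Omega)>0$. This interior rigidity ingredient is entirely absent from your proposal, and without it (or a substitute, e.g.\ a weighted splitting theorem applied to the would-be two ends) the claimed connectedness, and hence the stated rigidity, is not established.
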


\begin{remark}\label{remA}
(i) Condition \eqref{assump2} (or condition \eqref{assump2b}) and $H_f\ge0$ in Theorem
\ref{Ricf2} not only guarantee that weighted mean comparison theorems for hypersurfaces
hold (see Lemma \ref{lem}), but also ensure weighted relative volumes are well defined
(see Proposition \ref{welldef}).

(ii) The connected assumption of $\Omega$ in Theorem \ref{Ricf2} is required in
the rigidity statement, which ensures the connectedness of $\partial\Omega$ and $M^n$
has only one end.

(iii) The constant assumption of $H_f$ in the rigidity statement of Theorem
\ref{Ricf2} may be a technique condition. In manifold case, when the
equality of \eqref{Willineq} holds, there exists a Codazzi equation
connecting the Ricci curvature with the mean curvature, which leads to
the constant property of mean curvature on the boundary (see \cite{Wa}).
In weighted manifolds, when the equality of \eqref{anWill2}
(or \eqref{anWill2b}) occurs, there seems not to be a Codazzi type equation
relating between $\mathrm{Ric}_f$ (or $\mathrm{Ric}_f^m$) and $H_f$, and
we can not get the constant property of $H_f$. Moreover, we do not have
an example where equality is achieved in \eqref{anWill2} (or \eqref{anWill2b}),
but without the conical splitting. Therefore it is interesting to ask if the
constant assumption of $H_f$ can be removed.

(iv) If we give a stronger assumption $|\nabla f|\le a$ (or $|f|\le k$) on
the whole manifold $M^n$, then the Willmore-type inequality \eqref{anWill2}
(or \eqref{anWill2b}) still holds. In this setting, the equality case holds
if and only if $f$ is constant (or $f=0)$ on $M^n$ and rigidity statement is
the same as the manifold case (without the connected assumption of $\Omega$
and the constant assumption of $H_f$).
\end{remark}

On steady gradient Ricci soliton $\mathrm{Ric}_f=0$, we have the identity
$\mathrm{R}+|\nabla f|^2\equiv a$ for some constant $a\ge 0$ (see \cite{Ham}),
where $\mathrm{R}$ is the scalar curvature. By \cite{Cbl}, we have
$\mathrm{R}\ge0$ and hence $|\nabla f|^2\le a$ on $M^n$. So Theorem \ref{Ricf2}
and Remark \ref{remA}(iv) imply that
\begin{corollary}
Let $(M^n,g,e^{-f}dv)$ be a noncompact $n$-dimensional steady gradient Ricci soliton.
Let $\Omega\subset M^n$ be a bounded open set with smooth boundary $\partial\Omega$.
If the weighted mean curvature $H_f$ of $\partial\Omega$ is nonnegative everywhere,
then
\[
\int_{\partial\Omega}\left(\frac{H_f}{n-1}\right)^{n-1}e^{-f}d\sigma
\ge\mathrm{RV}_f(\Omega) |\mathbb{S}^{n-1}|.
\]
Moreover, if $\mathrm{RV}_f(\Omega)>0$, then the equality holds if and only if
$(M^n,g,e^{-f}dv)$ is Ricci flat with the constant function $f$ and
$(M^n\backslash\Omega,g, e^{-f}dv)$ is isometric to
\[
\left([r_0,\infty)\times\partial\Omega,dr^2+(\tfrac{r}{r_0})^2g_{\partial\Omega}\right),
\]
where $r_0=\left(\frac{|\partial\Omega|_f}{\mathrm{RV}_f(\Omega)|\mathbb{S}^{n-1}|}\right)^{\frac{1}{n-1}}$.
In particular, $\partial\Omega$ is a connected totally umbilic submanifold
with constant mean curvature.
\end{corollary}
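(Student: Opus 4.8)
The plan is to deduce this corollary directly from Theorem~\ref{Ricf2}(a) together with the global version recorded in Remark~\ref{remA}(iv), by checking that a steady soliton automatically falls into the strengthened regime of that remark. First I would note that on a steady gradient Ricci soliton one has $\mathrm{Ric}_f=0$, so in particular $\mathrm{Ric}_f\ge0$ and the curvature hypothesis of Theorem~\ref{Ricf2} is satisfied. The decisive point is then to upgrade assumption~\eqref{assump2}, which a priori is imposed only along minimal geodesics issuing from $\partial\Omega$, into a uniform gradient bound valid on all of $M^n$.

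To produce this uniform bound I would invoke the classical soliton structure equations: Hamilton's identity $\mathrm{R}+|\nabla f|^2\equiv a$ for the soliton constant $a\ge0$ (see \cite{Ham}), combined with the nonnegativity of the scalar curvature $\mathrm{R}\ge0$ from \cite{Cbl}. Together these force $|\nabla f|^2\le a$, hence $|\nabla f|\le\sqrt a$ everywhere on $M^n$. By Cauchy--Schwarz this yields $\partial_r f\ge-|\nabla f|\,|\nabla r|\ge-\sqrt a$ along every geodesic, so condition~\eqref{assump2} holds globally with constant $\sqrt a$. This places us precisely in the setting of Remark~\ref{remA}(iv), whence inequality~\eqref{anWill2} follows with no additional work, and $\mathrm{RV}_f(\Omega)$ is well defined by Proposition~\ref{welldef}(a).

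For the rigidity statement I would again appeal to Remark~\ref{remA}(iv): because the gradient bound now holds on the entire manifold, equality in the displayed inequality forces $f$ to be constant on $M^n$, and the conical splitting of $M^n\backslash\Omega$ holds with the same conclusions as in the manifold case, without the connectedness hypothesis on $\Omega$ or the constancy hypothesis on $H_f$. It then remains only to translate ``$f$ constant'' into the asserted geometry. Constancy of $f$ gives $\mathrm{Hess}\,f\equiv0$, so $\mathrm{Ric}=\mathrm{Ric}+\mathrm{Hess}\,f=\mathrm{Ric}_f=0$ and $(M^n,g)$ is Ricci flat; moreover $\nabla f\equiv0$ gives $H_f=H$, so the splitting~\eqref{anwarprod} reduces verbatim to the Euclidean model of Theorem~\ref{ACthm}, and in particular $\partial\Omega$ is connected, totally umbilic, and of constant mean curvature.

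The main obstacle here is conceptual rather than computational: the key observation is that the soliton identities promote the local hypothesis~\eqref{assump2} to a genuine global gradient bound, thereby activating the sharper equality analysis of Remark~\ref{remA}(iv). Once $|\nabla f|\le\sqrt a$ is in hand, everything reduces to citing the already-established theorem and remark, so no delicate estimates remain; the only point requiring care is to verify that $\mathrm{RV}_f(\Omega)>0$ is exactly the hypothesis under which the rigidity dichotomy of Theorem~\ref{Ricf2}(a) is available.
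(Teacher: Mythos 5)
Your proposal is correct and follows essentially the same route as the paper: the paper likewise combines Hamilton's identity $\mathrm{R}+|\nabla f|^2\equiv a$ with the nonnegativity $\mathrm{R}\ge0$ of \cite{Cbl} to obtain the global bound $|\nabla f|\le\sqrt{a}$, and then deduces both the inequality and the rigidity by citing Theorem \ref{Ricf2} together with Remark \ref{remA}(iv). The only difference is expository: you spell out the Cauchy--Schwarz step giving $\partial_r f\ge-\sqrt{a}$ and the translation of ``$f$ constant'' into Ricci flatness and $H_f=H$, which the paper leaves implicit.
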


On $(M^n,g,e^{-f}dv_g)$ with $\mathrm{Ric}_f^m\ge 0$, for a point $p\in M^n$,
we introduce the $m$-weighted asymptotic volume ratio
\[
\mathrm{AVR}^m_f(g):=\lim_{r\to\infty}\frac{|B(p,r)|_f}{|\mathbb{B}^m|r^m}.
\]
By the weighted volume comparison of $\mathrm{Ric}_f^m\ge 0$ (see \cite{BQ}),
$\mathrm{AVR}^m_f(g)$ is well defined and it is independent of the base
point $p$.

When $\mathrm{Ric}_f^m\ge 0$, we can establish another Willmore-type inequality
in weighted manifolds.

\begin{theorem}\label{Ricmf}
Let $(M^n,g,e^{-f}dv)$ be a complete noncompact weighted $n$-manifold
with $\mathrm{Ric}_f^m\ge 0$. Let $\Omega\subset M^n$ be a bounded open
set with smooth boundary $\partial\Omega$. Then
\begin{equation}\label{Will}
\int_{\partial\Omega}\left|\frac{H_f}{m-1}\right|^{m-1}e^{-f}d\sigma
\ge\mathrm{AVR}^m_f(g) |\mathbb{S}^{m-1}|.
\end{equation}
Moreover, if $M^n$ has only one end, $\mathrm{AVR}^m_f(g)>0$ and $H_f$ is
constant on $\partial\Omega$, then the equality of \eqref{Will} implies that
$\partial\Omega$ is connected and $(M^n\backslash\Omega,g, e^{-f}dv)$ is
isometric to
\begin{equation}\label{warprod2}
\left([r_0,\infty)\times\partial\Omega,dr^2+(\tfrac{r}{r_0})^2g_{\partial\Omega}\right),
\end{equation}
where
$r_0=\left(\frac{|\partial\Omega|_f}{\mathrm{AVR}^m_f(g)|\mathbb{S}^{m-1}|}\right)^{\frac{1}{m-1}}$.
\end{theorem}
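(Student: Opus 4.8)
\section*{Proof proposal}

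The plan is to adapt Wang's comparison argument \cite{Wa} to the weighted setting, the decisive point being that the hypothesis $\mathrm{Ric}_f^m\ge0$ makes the weighted mean curvature $H_f$ behave exactly like the mean curvature of an $m$-dimensional manifold with nonnegative Ricci curvature. Concretely, I would flow $\partial\Omega$ outward along the normal geodesics $\Phi(x,r)=\exp_x(r\nu(x))$ and track the weighted Jacobian $\mathcal{A}_f(x,r)=e^{-f(\Phi(x,r))}J(x,r)$ of the flow, where $J$ is the ordinary Jacobian with $J(x,0)=1$. A first-variation computation gives $\partial_r\log J=H$, hence $\partial_r\log\mathcal{A}_f=H-\partial_r f=H_f$, so that the weighted area element of the level set $\Sigma_r=\Phi(\partial\Omega,r)$ evolves precisely by the weighted mean curvature, with initial value $\mathcal{A}_f(x,0)=e^{-f(x)}$.

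The key analytic step is a weighted Riccati comparison. Differentiating the mean curvature along the flow gives $\partial_r H_f=-|A|^2-\mathrm{Ric}_f(\partial_r,\partial_r)$, and writing $\mathrm{Ric}_f=\mathrm{Ric}_f^m+\frac{df\otimes df}{m-n}$ together with $\mathrm{Ric}_f^m\ge0$ yields $\partial_r H_f\le-|A|^2-\frac{(\partial_r f)^2}{m-n}$. Using the elementary bound $|A|^2\ge H^2/(n-1)$, the substitution $H=H_f+\partial_r f$, and minimization of $\frac{(H_f+u)^2}{n-1}+\frac{u^2}{m-n}$ over $u=\partial_r f$ (minimized at $u=-\frac{(m-n)H_f}{m-1}$), I obtain the sharp inequality $|A|^2+\frac{(\partial_r f)^2}{m-n}\ge\frac{H_f^2}{m-1}$, and therefore $\partial_r H_f\le-\frac{H_f^2}{m-1}$. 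This is exactly the Riccati inequality of an $m$-dimensional manifold of nonnegative Ricci curvature. Integrating it shows that wherever $H_f(x)>0$ one has $\mathcal{A}_f(x,r)\le e^{-f(x)}\bigl(1+\tfrac{H_f(x)r}{m-1}\bigr)^{m-1}$, while where $H_f(x)\le0$ the weighted Jacobian is nonincreasing.

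Next I would integrate over the flow domain. By the coarea formula and the fact that the outward normal flow covers $M^n\setminus\Omega$ up to the cut locus (a set of measure zero), $\mathrm{Vol}_f\{d(\cdot,\Omega)<R\}$ is bounded above by $|\Omega|_f+\int_{\partial\Omega}\int_0^R\mathcal{A}_f\,dr\,d\sigma$. The part of $\partial\Omega$ where $H_f\le0$ contributes only $O(R)$, whereas on $\{H_f>0\}$ the bound $\int_0^R(1+\tfrac{H_f r}{m-1})^{m-1}dr\sim\frac{H_f^{m-1}R^m}{m(m-1)^{m-1}}$ gives a leading contribution $\frac{R^m}{m(m-1)^{m-1}}\int_{\partial\Omega}(H_f^+)^{m-1}e^{-f}d\sigma$. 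Dividing by $|\mathbb{B}^m|R^m$, letting $R\to\infty$, and using $|\mathbb{S}^{m-1}|=m|\mathbb{B}^m|$ together with the well-definedness of $\mathrm{AVR}^m_f(g)$ from the weighted Bishop--Gromov comparison \cite{BQ}, I arrive at $\mathrm{AVR}^m_f(g)\le\frac{1}{(m-1)^{m-1}|\mathbb{S}^{m-1}|}\int_{\partial\Omega}|H_f|^{m-1}e^{-f}d\sigma$, which rearranges to \eqref{Will}.

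For the rigidity, equality forces every inequality above to saturate along almost every normal geodesic: $\mathrm{Ric}_f^m(\partial_r,\partial_r)\equiv0$, the level sets are totally umbilic with $A=\frac{H}{n-1}g_{\Sigma_r}$, $\partial_r f=-\frac{(m-n)H_f}{m-1}$, and the cut locus is empty so that $\Phi$ is a global diffeomorphism onto $M^n\setminus\Omega$. Umbilicity with the saturated Riccati equation forces $H_f=(m-1)/r$ after shifting the parameter by $(m-1)/H_f|_{\partial\Omega}$, whence $H=(n-1)/r$ and the metric is the cone $dr^2+(r/r_0)^2g_{\partial\Omega}$; integrating $\partial_r f=-(m-n)/r$ recovers the weight $f=-(m-n)\log(r/r_0)+\mathrm{const}$, which is exactly what renders the weighted cone $m$-dimensional in its volume growth. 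The assumed constancy of $H_f$ on $\partial\Omega$, via the equality case, pins down $r_0=(|\partial\Omega|_f/(\mathrm{AVR}^m_f(g)|\mathbb{S}^{m-1}|))^{1/(m-1)}$, and the single-end hypothesis forces $\partial\Omega$ to be connected, since in the cone structure each boundary component would generate a distinct end. I expect this rigidity analysis to be the main obstacle: one must upgrade pointwise saturation to a genuine global isometry, rule out the cut locus, and reconstruct the weight so that the splitting is consistent with the weighted measure. The volume estimate itself is comparatively routine once the weighted Riccati inequality is established.
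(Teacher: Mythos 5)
Your proposal is correct and takes essentially the same route as the paper: your Riccati inequality $\partial_r H_f\le -\frac{H_f^2}{m-1}$, obtained by absorbing $\partial_r f$ through the $\frac{df\otimes df}{m-n}$ term, is precisely the paper's Lemma \ref{lemv2} (there derived from the weighted Bochner formula for the distance function, which is the traced form of your shape-operator equation), and the subsequent weighted Jacobian bound, the division by $|\mathbb{B}^m|R^m$ with $R\to\infty$, and the rigidity outline (saturation of all inequalities, umbilicity, cone metric, reconstruction of the weight from $\partial_r f=-\frac{(m-n)H_f}{m-1+H_f r}$, and the one-end hypothesis forcing $\partial\Omega$ connected) all match the paper's proof of Theorem \ref{Ricmf}.
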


\begin{remark}
(i) The only one end assumption is due to the rigidity statement of Theorem
\ref{Ricmf}. In manifold case, $\mathrm{Ric}\ge 0$ and $\mathrm{AVR}(g)>0$
suffice to ensure the manifold has only one end by the Cheeger-Gromoll splitting
theorem. However, when $\mathrm{Ric}^m_f\ge 0$ and $\mathrm{AVR}^m_f(g)>0$,
we do not know if the weighted manifold has only one end. It is interesting
to ask if only one end assumption can be removed.

(ii) The constant assumption of $H_f$ in the rigidity statement of Theorem
\ref{Ricmf} is due to the same reason in Remark \ref{remA}(iii).

(iii) When $f$ is constant (and $m=n$), the only one end assumption and
the constant assumption of $H_f$ automatically hold, and Theorem \ref{Ricmf}
returns to Theorem \ref{ACthm}.
\end{remark}

On a complete shrinking gradient Ricci soliton (\emph{shrinker} for short)
$(M^n,g,e^{-f}dv)$, we assume, without loss of generality that (see \cite{Ham})
\begin{equation}\label{norm}
\mathrm{Ric}_f=\frac 12\, g \quad \mathrm{and} \quad  \mathrm{R}+|\nabla f|^2=f.
\end{equation}
Given a base point $p\in M^n$, we consider the classical asymptotic
volume ratio
\[
\mathrm{AVR}(g):=\lim\limits_{r\to\infty}\frac{|B(p,r)|}{|\mathbb{B}^n|r^n}
\]
on shrinkers. From \cite{CLY}, we know that $\mathrm{AVR}(g)$ on shrinkers
always exists and it is independent of the base point $p$. Similar to the argument
of Theorem \ref{Ricf2}, we give a sharp Willmore-like inequality in shrinkers.
\begin{theorem}\label{shri}
Let $(M^n,g,e^{-f}dv)$ be a noncompact $n$-shrinker with \eqref{norm}. Let
$\Omega\subset M^n$ be a bounded open set with smooth boundary $\partial\Omega$.
If the mean curvature $H$ of $\partial\Omega$ is positive everywhere, then
\begin{equation}\label{shwill}
\int_{\partial\Omega}\exp\left\{\frac{(n-1)^2}{4H^2}+f-\frac{n-1}{H}\partial_\nu f\right\}
\cdot\left(\frac{H}{n-1}\right)^{n-1}d\sigma\ge \mathrm{AVR}(g) |\mathbb{S}^{n-1}|,
\end{equation}
where $\partial_\nu f$ denotes the derivative of $f$ in outer unit normal directions
of $\partial\Omega$. Moreover, the equality of \eqref{shwill} holds if and only
if $\partial\Omega$ is a round sphere and $(M^n,g,e^{-f}dv)$ is isometric to the
Gaussian shrinker $(\mathbb{R}^n, \delta_{ij},e^{-|x|^2/4}dv)$.
\end{theorem}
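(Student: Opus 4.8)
The plan is to run the outward normal exponential flow from $\partial\Omega$ and to track a weighted Willmore functional along the equidistant hypersurfaces, following Wang's comparison scheme adapted to the soliton structure. Set $r(x):=d(x,\Omega)$ and, for $t\ge0$, let $\Sigma_t:=\{r=t\}$ be the level sets, with outward unit normal $\nu=\nabla r$, mean curvature $H_t$, second fundamental form $A_t$, and $f_\nu:=\partial_\nu f=g(\nabla f,\nu)$. On each $\Sigma_t$ I would define the density
\[
\Theta_t:=\exp\Big\{\tfrac{(n-1)^2}{4H_t^2}+f-\tfrac{n-1}{H_t}f_\nu\Big\}\Big(\tfrac{H_t}{n-1}\Big)^{n-1},
\]
and set $W(t):=\int_{\Sigma_t}\Theta_t\,d\sigma_t$, so that $W(0)$ is exactly the left-hand side of \eqref{shwill}. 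The first sanity check is the Gaussian model: for a centered sphere of radius $r$ one has $H=(n-1)/r$, $f=r^2/4$, $f_\nu=r/2$, whence the exponent vanishes identically and $\Theta_t\,d\sigma_t$ is constant along the flow. This pins down the precise form of $\Theta_t$ and identifies the equality configuration.

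The analytic inputs are the traced Riccati equation $H_t'=-|A_t|^2-\mathrm{Ric}(\nu,\nu)$, the evolutions $\tfrac{d}{dt}f=f_\nu$ and $\tfrac{d}{dt}f_\nu=\mathrm{Hess}\,f(\nu,\nu)$, the soliton identity $\mathrm{Ric}(\nu,\nu)+\mathrm{Hess}\,f(\nu,\nu)=\tfrac12$, the Cauchy--Schwarz bound $|A_t|^2\ge H_t^2/(n-1)$, and the first variation $\tfrac{d}{dt}(d\sigma_t)=H_t\,d\sigma_t$. Differentiating $\log\Theta_t$ and substituting these relations, I would express $W'(t)$ as an integral over $\Sigma_t$; on the Gaussian model all contributions cancel, consistent with $W'\equiv0$.

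The hard part will be the monotonicity $W'(t)\le0$. After inserting Cauchy--Schwarz, the pointwise integrand retains a factor $\mathrm{Ric}(\nu,\nu)$ multiplied by $\tfrac{n-1}{H_t^2}\big(\tfrac{n-1}{2H_t}-f_\nu\big)$, and on a shrinker $\mathrm{Ric}(\nu,\nu)$ is \emph{not} sign-definite, so no pointwise monotonicity is available — this is the genuine obstacle, since the coefficient only vanishes along the exact Gaussian profile. The crux is therefore to obtain the sign \emph{only after integrating over the closed hypersurface} $\Sigma_t$: I would rewrite the normal Ricci term via the Gauss equation and the soliton identities $R+|\nabla f|^2=f$ and $\Delta_f f=\tfrac n2-f$, arranging the indefinite contribution as a tangential divergence (plus terms controlled by umbilicity defect) so that it integrates away and leaves $W'(t)\le0$, with equality forcing $A_t=\tfrac{H_t}{n-1}\,g$. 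The non-smoothness of $\Sigma_t$ across the cut locus is handled in the standard way, by restricting to the smooth star-shaped part and discarding the singular remainder, which only improves the inequality.

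To close, I would evaluate $\lim_{t\to\infty}W(t)$. Using the Cao--Zhou asymptotics $f\sim\tfrac14 r^2$ and $|\nabla f|\sim\tfrac12 r$ together with the Euclidean volume growth measured by $\mathrm{AVR}(g)$, along the flow one expects $H_t\sim(n-1)/t$, $f\sim t^2/4$, $f_\nu\sim t/2$, so the exponent tends to $0$, $\Theta_t\to(H_t/(n-1))^{n-1}$, and $W(t)\to\mathrm{AVR}(g)\,|\mathbb{S}^{n-1}|$; monotonicity then gives \eqref{shwill}. For the rigidity, total umbilicity of every $\Sigma_t$ together with the vanishing forced in the integrated curvature term should yield $\mathrm{Ric}\equiv0$ and hence $\mathrm{Hess}\,f=\tfrac12 g$, so $(M^n,g)$ is flat with $f$ a quadratic distance function; thus $(M^n,g,e^{-f}dv)$ is the Gaussian shrinker and $\partial\Omega$ is a round sphere.
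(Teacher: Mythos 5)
Your proposal has the right overall shape --- comparison along the outward normal exponential map, a quantity whose value at $t=0$ is the left side of \eqref{shwill} and whose limit should be $\mathrm{AVR}(g)|\mathbb{S}^{n-1}|$ --- and you correctly identify the integrand and the equality configuration. But the two steps that carry all the content are left as hopes, and the mechanism you propose for the first one is unlikely to work. You acknowledge that after differentiating $W(t)$ the integrand contains $\mathrm{Ric}(\nu,\nu)$ times a factor that is not sign-definite, and you propose to remove it by recasting it as a tangential divergence via the Gauss equation. The traced Gauss equation gives $2\,\mathrm{Ric}(\nu,\nu)=R-R_{\Sigma_t}+H_t^2-|A_t|^2$, and the intrinsic scalar curvature $R_{\Sigma_t}$ weighted by your non-constant density $\Theta_t$ is not a divergence; nothing in the soliton structure makes this term integrate away. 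This is precisely the difficulty the paper's proof is built to avoid: it never differentiates a hypersurface functional at all. Instead, working along each single normal geodesic from $p\in\partial\Omega$, it integrates the traced Riccati inequality (with $\mathrm{Ric}=\tfrac12 g-\mathrm{Hess}\,f$ substituted) twice in $r$, and then absorbs every indefinite $f$-term using the identities in \eqref{norm} together with Chen's $\mathrm{R}\ge0$: these give $|\nabla f|^2\le f$, hence $f'(r)\le\sqrt{f(r)}$, and the AM--GM step \eqref{fmo} closes the differential inequality \eqref{monton}, namely $\{[\,n-1+H(p)r\,]\ln\theta(r,p)\}'\le c(p)$ with $c(p)=H(p)f(p)-(n-1)f'(p)+\tfrac{(n-1)^2}{4H(p)}\ge 0$. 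The exponent in \eqref{shwill} is exactly $c(p)/H(p)$: it appears as a constant produced by the pointwise comparison, not as an ansatz density propagated along a flow. Your proposal never invokes $\mathrm{R}\ge0$ (equivalently $|\nabla f|^2\le f$) in the inequality part, yet this is the ingredient that makes the indefinite terms controllable.

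The second gap is the limit $W(t)\to\mathrm{AVR}(g)|\mathbb{S}^{n-1}|$. Your exponent is a difference of terms each of size $O(t^2)$ (since $f\sim t^2/4$, $f_\nu\sim t/2$, $H_t\sim(n-1)/t$), so convergence of the exponent to $0$ requires cancellation to order $o(1)$, while the asymptotics you quote only give $f=\tfrac14 r^2+O(r)$; an $O(t)$ error in the exponent destroys the limit. You would also need a matching lower bound $H_t\ge\tfrac{n-1}{t}(1+o(1))$, which no comparison argument supplies (Riccati comparison bounds $\Delta r$ from above only). The paper sidesteps all of this by comparing weighted volumes directly: it integrates the area-element bound \eqref{voeup} over $r\in[0,R]$, divides by $|\mathbb{B}^n|R^n$ and applies L'H\^opital, so only the bounded limit $e^{cr/(n-1+Hr)}\to e^{c/H}$ enters. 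Finally, your rigidity route to $\mathrm{Ric}\equiv0$ presupposes the unproven divergence structure; the paper instead extracts $|\nabla f|^2\equiv f$ from equality in \eqref{fmo}, hence $\mathrm{R}\equiv0$, then quotes \cite{PRS} to identify the Gaussian shrinker, and uses Codazzi plus Alexandrov's theorem to conclude that $\partial\Omega$ is a round sphere.
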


\begin{remark}
We would like to point out that the equality of \eqref{shwill} implies that the
scalar curvature of shrinker vanishes; see Section \ref{remshri} for the proof.
Combining it with \cite{PRS}, such shrinker must be isometric to the Gaussian
shrinker.
\end{remark}

As applications of Theorems \ref{Ricf2} and \ref{Ricmf}, when a weighted
manifold admits a critical set of weighted isoperimetric functional, we
can get some sharp isoperimetric type inequalities; see Theorems \ref{iso1}
and \ref{iso2} in Section \ref{isoin}. These results can be regarded as
weighted versions of a sharp isoperimetric inequality in manifolds proved
by Ros \cite{Ro}. We remark that from Theorem \ref{shri}, there seems to
be some obstacle to deduce a sharp isoperimetric inequality in shrinkers.
This is because the Willmore-like inequality \eqref{shwill} contains an
exponential term which is obviously different from the classical Willmore
inequality and we do not obtain a isoperimetric type inequality.

It is well-known that the Willmore inequality \eqref{Willineq} leads to sharp
global isoperimetric inequalities when the manifold has $\mathrm{Ric}\ge0$ and
Euclidean volume growth. The $3$-dimensional case was proved by Agostiniani,
Fogagnolo and Mazzieri \cite{AFM} by using Huisken's mean curvature flows;
the $n$-dimensional ($3\le n\le 7$) case was proved by Fogagnolo and Mazzieri
\cite{FM} by using an exhaustion of outward minimising sets and Kleiner's
arguments \cite{Kle}. As pointed out by the anonymous referee, inspired by the
manifold case, it is natural to ask if our Willmore type inequalities (Theorems
\ref{Ricf2}, \ref{Ricmf} and \ref{shri}) could yield sharp weighted isoperimetric
inequalities by using Fogagnolo-Mazzieri's argument \cite{FM}. We notice that
Johne \cite{Jo} has recently obtained a sharp weighted isoperimetric type inequality
in weighted manifolds when $\mathrm{Ric}_f^m\ge 0$ ($m<\infty$) by using ABP-method,
generalizing Brendle's manifold case \cite{Br2}. Balogh and Krist\'aly \cite{BK}
have generalized isoperimetric inequalities to a nonsmooth setting. They have
obtained a sharp isoperimetric inequality in $CD(0,N)$ metric measure spaces
by using optimal mass transport theory. But up to now, to the authors' knowledge,
there has not been any sharp weighted isoperimetric type inequality in weighted
manifolds when $\mathrm{Ric}_f\ge 0$. To solve this problem, inspired by
Fogagnolo-Mazzieri's work \cite{FM}, we may need to consider many issues
in weighted manifolds, for example, the existence of a weighted mean-convex
exhaustion, the existence and regularity of weighted constrained isoperimetric
sets, and the fundamental relation on the weighted mean curvature of boundaries
for a bounded subset and its weighted constrained isoperimetric set, etc.
These issues and eventual sharp weighted isoperimetric inequalities via Willmore
type inequalities in weighted manifolds will be expected to be studied in the
future.

The rest of paper is organized as follows. In Section \ref{comp}, we prove
some weighted mean curvature comparison theorems for hypersurfaces in weighted
manifolds. Using them, we prove that two weighted relative volumes defined in
introduction are well defined. We also prove some rigidity results for compact
weighted manifolds with boundaries. In Section \ref{willm1}, we apply our new
comparison results to prove Theorem \ref{Ricf2}. In Section \ref{Rmfr}, we
apply a similar argument of Theorem \ref{Ricf2} to prove Theorem \ref{Ricmf}.
In Section \ref{remshri}, we give a proof of Theorem \ref{shri}. In Section
\ref{isoin}, we apply Theorems \ref{Ricf2} and \ref{Ricmf} to prove sharp
isoperimetric type inequalities in weighted manifolds.

%55555555555555555555555555555555555555555555555555555555555555555555555555555555555555555555555555555555555555555555555555555555555555555555555555555555555555

\section{Comparison theorems}\label{comp}
In this section, we give some weighted mean curvature comparison theorems for
hypersurfaces under the nonnegative Bakry-\'Emery Ricci curvature. As
applications, we show that weighted relative volumes in introduction are
well defined. Besides, we recall a generalized Reilly formula in weighted
manifolds. Using this Reilly formula, we prove a rigidity result and a
Heintze-Karcher type inequality for compact weighted manifolds with boundaries.

On $(M^n,g,e^{-f}dv)$, let $\Omega\subset M^n$ be a bounded open set with
smooth boundary $\partial\Omega$. Throughout this paper, we denote by $\nabla$
and $\Delta$ the gradient and the Laplacian on $\Omega$ of $(M^n,g)$
respectively, while by $\nabla_{\partial\Omega}$ and $\Delta_{\partial\Omega}$
the gradient and the Laplacian on $\partial\Omega$ respectively. We let
$\nu$ denote the outer unit normal vector of $\partial\Omega$. Then the second
fundamental form of $\partial\Omega$ is defined by
\[
h(X,Y)=g(\nabla_X\nu, Y),
\]
where $X,Y\in T(\partial\Omega)$. The trace of the second fundamental
$h$, i.e., $H=\mathrm{Trace}_gh$, denotes the mean curvature of
$\partial\Omega$, and the corresponding weighted mean curvature is
defined by
\[
H_f:=H-g(\nabla f,\nu).
\]
The weighted mean curvature $H_f$ often appears in the critical point of
weighted area functional of the submanifold $\partial\Omega$.
We say that $\partial\Omega$ is weighted minimal if $H_f\equiv0$.

For a fixed point $p\in\partial\Omega$, let $\gamma_p(t)=\exp_pt\nu(p)$
be the normal geodesic with initial velocity $\nu(p)$. We set
\[
\tau(p)=\sup\left\{ l>0\,|\, \gamma_p \, \text{is minimizing the distance to $p\in\partial\Omega$ on}\, [0,l] \right\}\in(0,\infty].
\]
Then $\tau$ is continuous on $\partial\Omega$ and the focus locus
\[
C(\partial\Omega):=\{\exp_p\tau(p)\nu(p)|\tau(p)<\infty\}
\]
is a closed set of measure zero in $M$. Let
\[
E=\{(r,p)\in[0,\infty)\times\partial\Omega|r<\tau(p)\}
\]
and the exponential map $\Phi: E\to(M\backslash\Omega)\backslash C(\partial\Omega)$
defined by $\Phi(r,p)=\exp_pr\nu(p)$ is a diffeomorphism. On $E$, the pull back
of weighted volume form in polar coordinates is
\[
e^{-f}dv=\mathcal{A}_f(r,p)drd\sigma(p),
\]
where $\mathcal{A}_f(r,p)=e^{-f}\mathcal{A}(r,p)$ and $\mathcal{A}(r,p)$
is the Riemannian volume element in geodesic polar coordinates. Customarily,
we understand $r$ as the distance function to $\partial\Omega$ and it is smooth
on $M\backslash\Omega$ away from $C(\partial\Omega)$.

Below we shall prove mean curvature comparison theorems for hypersurfaces
in weighted manifolds with $\mathrm{Ric}_f\ge 0$, extending the manifold case. Compared
with the manifold version, a obvious difficultly is that our proof needs to deal
with some extra terms of $f$. Fortunately, we can employ Wei-Wylie's proof trick
\cite{WW} to overcome this difficultly.

\begin{lemma}\label{lem}
Let $(M^n,g,e^{-f}dv)$ be a complete noncompact weighted manifold with
$\mathrm{Ric}_f\ge 0$. Let $\Omega\subset M^n$ be a bounded open set with
smooth boundary $\partial\Omega$. The weighted mean curvature $H_f$ of
$\partial\Omega$ is nonnegative everywhere. Fix a point $p\in \partial\Omega$.

(a) If $\partial_rf\ge -a$ for some constant $a\ge 0$ along a minimal
geodesic segment from $p$, then $r(x)=d(p,x)$ satisfies
\[
\Delta_f r\le\frac{(n-1)H_f(p)}{n-1+H_f(p)r}+a-\frac{(n-1)^2a}{\left[n-1+H_f(p)r\right]^2}
\]
for $r\in[0,\tau(p))$ along that minimal geodesic segment from $p$.

(b) If $|f|\le k$ for some constant $k\ge 0$ along a minimal
geodesic segment from $p$, then $r(x)=d(p,x)$ satisfies
\[
\Delta_f r\le\frac{(n-1+4k)H_f(p)}{n-1+H_f(p)r}
\]
for $r\in[0,\tau(p))$ along that minimal geodesic segment from $p$.
\end{lemma}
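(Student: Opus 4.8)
The plan is to run a single weighted Riccati (mean-curvature) comparison along the normal geodesics leaving $\partial\Omega$, and then integrate the resulting first-order inequality differently in the two cases. Fix $p\in\partial\Omega$, let $\gamma_p(t)=\exp_p(t\nu(p))$ and read $r$ as the distance to $\partial\Omega$ along $\gamma_p$, so that the level sets of $r$ are the hypersurfaces parallel to $\partial\Omega$ and $m_f(r):=\Delta_f r$ is smooth on $[0,\tau(p))$ with $m_f(0)=H_f(p)$. Writing $m(r):=\Delta r=m_f+\partial_r f$ for the unweighted mean curvature of the level set, the radial curvature (Riccati) equation together with the Cauchy--Schwarz bound $|\mathrm{Hess}\,r|^2\ge(\Delta r)^2/(n-1)$, after substituting $\mathrm{Ric}=\mathrm{Ric}_f-\mathrm{Hess}\,f$ and $\mathrm{Hess}\,f(\partial_r,\partial_r)=\partial_r^2 f$, yields the single key inequality
\[
m_f'(r)=-|\mathrm{Hess}\,r|^2-\mathrm{Ric}_f(\partial_r,\partial_r)\le-\frac{\left(m_f(r)+\partial_r f\right)^2}{n-1},\qquad r\in[0,\tau(p)),
\]
where only $\mathrm{Ric}_f\ge0$ is used. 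Everything now reduces to integrating this in the presence of the first-order term $\partial_r f$, for which we have one-sided control in (a) and only integral control in (b).

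For part (a) I would argue by direct comparison, taking as competitor the explicit right-hand side of the claim,
\[
\phi(r):=\frac{(n-1)H_f(p)}{n-1+H_f(p)r}+a-\frac{(n-1)^2a}{\left[n-1+H_f(p)r\right]^2},\qquad \phi(0)=H_f(p)=m_f(0).
\]
The point is that $\phi$ is a supersolution of the weighted Riccati inequality for \emph{every} admissible value $u\ge-a$ of $\partial_r f$. Setting $\psi(r):=(n-1)/(n-1+H_f(p)r)$ one computes $-(n-1)\phi'=H_f(p)\,\psi^2\big(H_f(p)-2a\psi\big)$ and checks the key identity
\[
(\phi-a)^2-\big(-(n-1)\phi'\big)=a^2\psi^4\ge0 .
\]
Hence when $\phi\ge a$ one has $\min_{u\ge-a}(\phi+u)^2=(\phi-a)^2\ge-(n-1)\phi'$, while when $\phi<a$ the identity forces $\phi'>0$ and the right-hand side is nonpositive; in both cases $-(\phi+u)^2/(n-1)\le\phi'$ for all $u\ge-a$. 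Consequently, at any first contact point $m_f=\phi$ one gets $m_f'\le\phi'$, and a standard maximum-principle comparison on $[0,\tau(p))$ upgrades $m_f(0)=\phi(0)$ to $m_f\le\phi$, which is exactly (a). The correction term $a(1-\psi^2)$ is engineered precisely so that the leftover in the displayed identity is the favorably-signed $a^2\psi^4$.

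For part (b) the hypothesis $|f|\le k$ gives no pointwise bound on $\partial_r f$, so the pointwise competitor of (a) is unavailable and I would instead integrate the key inequality following Wei--Wylie's trick \cite{WW}. Rewriting it as $(\Delta_f r)'\le-(\Delta r)^2/(n-1)$ and integrating against a suitable power of the weighted area element $\mathcal A_f=e^{-f}\mathcal A$, the only term carrying $f$ is an integral of $\partial_t^2 f$ against a positive weight; integrating this by parts trades the second derivative of $f$ for boundary values of $\partial_t f$ multiplied by $\mathcal A_f$-factors, which one then controls by sandwiching $e^{-f}\in[e^{-k},e^{k}]$. Carrying these $e^{\pm k}$ factors through the integrated comparison has the effect of replacing the model growth rate of dimension $n$ by that of dimension $n+4k$, turning $(n-1)/(\,\cdot\,)$ into $(n-1+4k)/(\,\cdot\,)$ and producing the stated bound $m_f(r)\le(n-1+4k)H_f(p)/(n-1+H_f(p)r)$.

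The main obstacle is exactly the surviving first-order term $\partial_r f$ in the weighted Riccati inequality. In the $m$-Bakry--Émery setting this term is eliminated outright by the algebraic identity $\tfrac{(m_f+u)^2}{n-1}+\tfrac{u^2}{m-n}\ge\tfrac{m_f^2}{m-1}$, leaving a clean autonomous comparison; here, with only $\mathrm{Ric}_f\ge0$, it must be absorbed by hand — integrally in (a) through the built-in correction $a(1-\psi^2)$, and via integration by parts in (b), where the absence of pointwise control is what manifests as the dimension shift $n\mapsto n+4k$. A secondary technical point is that $\Delta r$ need not remain nonnegative, so the comparison must be run only up to the cut/focal value $\tau(p)$ and the sign bookkeeping in the maximum-principle step must be organized using $H_f\ge0$ (as in the two-regime discussion above).
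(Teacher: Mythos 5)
Your part (a) is correct and takes a genuinely different route from the paper. The paper multiplies the Riccati inequality $\partial_r(\Delta r)+(\Delta r)^2/(n-1)\le f''(r)$ by the integrating factor $(n-1+H_f(p)r)^2$, integrates, absorbs the boundary terms $f'(r)$ and $f'(0)$ into $\Delta_f r$ and $H_f(p)$ respectively, and then estimates the leftover integral $-\int_0^r f'(t)\left[(n-1+H_f(p)t)^2\right]'dt$ using $f'\ge-a$ and the monotonicity of the weight. You instead exhibit the claimed right-hand side $\phi$ as a pointwise supersolution of the weighted Riccati inequality uniformly in $u=\partial_r f\ge-a$; your identity $(\phi-a)^2+(n-1)\phi'=a^2\psi^4$ checks out, as does the two-regime discussion. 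What your approach buys is a barrier argument with no integration at all; what it costs is care in the comparison step. One caveat there: since $m_f(0)=\phi(0)$, the ``first contact point'' phrasing alone is not sufficient (a function vanishing at a point with nonpositive derivative there can still become positive immediately afterwards, e.g. $r\mapsto r^2$). The comparison should be closed by the standard factorization for $w=m_f-\phi$, namely $w'\le-\tfrac{1}{n-1}(m_f+\phi+2u)\,w$, followed by Gronwall; this works because your supersolution property holds for the pointwise value $u=u(r)$. That repair is routine, so I count (a) as a correct alternative proof.

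Part (b), however, has a genuine gap. You propose one integration by parts that ``trades the second derivative of $f$ for boundary values of $\partial_t f$,\ldots{} which one then controls by sandwiching $e^{-f}\in[e^{-k},e^{k}]$.'' No bound on $f$ (equivalently, on $e^{-f}$) can control boundary values of $\partial_t f$: as you yourself observe at the start of (b), the hypothesis $|f|\le k$ gives no pointwise information about $f'$. The mechanism that actually works (and is the paper's proof) is different in two essential ways. First, the weight is the explicit model factor $(n-1+H_f(p)t)^2$, not ``a power of $\mathcal{A}_f$'' -- the latter is precisely the unknown being estimated. Second, after the first integration by parts, the boundary term $f'(r)(n-1+H_f(p)r)^2$ is not estimated but moved to the left to convert $\Delta r$ into $\Delta_f r$, while $f'(0)$ converts $H(p)$ into $H_f(p)$; the surviving integral $-2H_f(p)\int_0^r f'(t)\,(n-1+H_f(p)t)\,dt$ must then be integrated by parts a \emph{second} time so that only $f$ itself appears, and the three resulting terms, each bounded via $|f|\le k$, sum to exactly $4kH_f(p)\left[n-1+H_f(p)r\right]$. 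That sum is the precise arithmetic origin of the shift $n-1\mapsto n-1+4k$. Your proposed explanation of the shift -- ``carrying $e^{\pm k}$ factors through the integrated comparison'' -- is not a mechanism: multiplicative factors $e^{\pm k}$ would yield a constant such as $e^{2k}$ in front of the model rate, never an additive change in the exponent. As written, your part (b) would not go through; it is missing the absorption step and the second integration by parts.
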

\begin{proof}[Proof of Lemma~\ref{lem}]
For a fixed point $p\in \partial\Omega$, using $|\nabla r|=1$ almost everywhere
in the Bochner formula, we get that
\begin{equation}\label{ricca}
\begin{aligned}
0=\frac{1}{2}\Delta|\nabla r|^2&=|\mathrm{Hess}\,r|^2
+\frac{\partial}{\partial r}\left(\Delta r\right)+{\rm Ric}(\nabla r,\nabla r)\\
&\ge\frac{(\Delta r)^2}{n-1}+\frac{\partial}{\partial r}\left(\Delta r\right)
+{\rm Ric}(\nabla r,\nabla r)
\end{aligned}
\end{equation}
for $r\in[0,\tau(p))$, where we used the Cauchy-Schwarz inequality in the second
inequality. Since $\mathrm{Ric}_f\ge 0$, then
\[
\frac{\partial}{\partial r}(\Delta r)+\frac{(\Delta r)^2}{n-1}\le f''(r),
\]
for $r\in[0,\tau(p))$, where $f''(r):=\mathrm{Hess} f(\partial r,\partial r)=\frac{d^2}{dr^2}(f\circ \gamma)(r)$.
This inequality can be written as
\[
\frac{\tfrac{\partial}{\partial r}[(n-1+H_f(p)r)^2\Delta r]}{(n-1+H_f(p)r)^2}
+\frac{1}{n-1}\left[\Delta r-\frac{(n-1)H_f(p)}{n-1+H_f(p)r}\right]^2
\le\frac{(n-1)H_f(p)^2}{(n-1+H_f(p)r)^2}+f''(r)
\]
for $r\in[0,\tau(p))$. Discarding the square term on the left hand side, we have
\[
\frac{\partial}{\partial r}\left[(n-1+H_f(p)r)^2\Delta r\right]
\le(n-1)H_f(p)^2+f''(r)(n-1+H_f(p)r)^2
\]
for $r\in[0,\tau(p))$. Integrating this inequality and using the initial condition
$\Delta r|_{r=0}=H(p)$, we get
\begin{equation*}
\begin{aligned}
(n-1+H_f(p)r)^2\Delta r-(n-1)^2H(p)&\le(n-1)H_f(p)^2r+(n-1+H_f(p)t)^2f'(t){\Big|}^r_0\\
&\quad-\int^r_0f'(t)\left[(n-1+H_f(p)t)^2\right]'dt
\end{aligned}
\end{equation*}
for $r\in[0,\tau(p))$. Rearranging some terms, we have
\begin{equation}
\begin{aligned}\label{meagen}
\left[n-1+H_f(p)r\right]^2\Delta_f r&\le (n-1)H_f(p)\left[n-1+H_f(p)r\right]\\
&\quad-\int^r_0f'(t)\left[(n-1+H_f(p)t)^2\right]'dt
\end{aligned}
\end{equation}
for $r\in[0,\tau(p))$.

Case (a): If $f'(t)\ge -a$ and $H_f(p)\ge0$, then
$\big[(n-1+H_f(p)t)^2\big]'\ge0$ and hence
\begin{equation}
\begin{aligned}\label{meanva}
-\int^r_0f'(t)\left[(n-1+H_f(p)t)^2\right]'dt
&\le a\int^r_0\left[(n-1+H_f(p)t)^2\right]'dt\\
&=a \left[n-1+H_f(p)r\right]^2-(n-1)^2a.
\end{aligned}
\end{equation}
Substituting this into \eqref{meagen} yields the conclusion of Case (a).

Case (b): If $|f|\le k$ and $H_f(p)\ge0$, by the integration by parts,
we have that
\begin{equation*}
\begin{aligned}
-&\int^r_0f'(t)\big[(n-1+H_f(p)t)^2\big]'dt\\
&=-2H_f(p)\int^r_0[n-1+H_f(p)t]df(t)\\
&=-2\big[n-1+H_f(p)r\big]H_f(p)f(r)+2(n-1)H_f(p)f(p)+2H_f(p)^2\int^r_0f(t)dt\\
&\le 2kH_f(p)\big[n-1+H_f(p)r\big]+2(n-1)kH_f(p)+2kH_f(p)^2r\\
&=4kH_f(p)\big[n-1+H_f(p)r\big].
\end{aligned}
\end{equation*}
Substituting this into \eqref{meagen} yields the desired estimate of Case (b).
\end{proof}

Next, we shall give another weighted mean curvature comparison theorems for
tubular hypersurfaces in weighted manifolds with $\mathrm{Ric}^m_f\ge 0$
(without any assumption on $f$), also extending the manifold case.

\begin{lemma}\label{lemv2}
Let $(M^n,g,e^{-f}dv)$ be a complete noncompact weighted manifold with
$\mathrm{Ric}^m_f\ge 0$. Let $\Omega\subset M^n$ be a bounded open set with
smooth boundary $\partial\Omega$. For any a point $p\in \partial\Omega$,
the distance function $r(x)=d(p,x)$ satisfies
\[
\Delta_f r\le\frac{(m-1)H_f(p)}{m-1+H_f(p)r}
\]
for $r\in[0,\tau(p))$, where $\tau(p)\le\frac{m-1}{H_f^{-}(p)}$
and $H_f^{-}(p):=\max\{-H_f(p),\,0\}$.
\end{lemma}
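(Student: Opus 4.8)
The plan is to follow the same Bochner-formula strategy as in the proof of Lemma~\ref{lem}, but to exploit the extra term $-\frac{df\otimes df}{m-n}$ appearing in $\mathrm{Ric}_f^m$ so as to collapse everything into a \emph{clean} $(m-1)$-dimensional Riccati inequality for $\Delta_f r$. First I would fix $p\in\partial\Omega$, let $\gamma_p$ be the normal geodesic with $\gamma_p'(0)=\nu(p)$, and write $r$ for arclength along $\gamma_p$, so that $\nabla r=\partial r$, $|\nabla r|=1$ almost everywhere, and $\Delta r|_{r=0}=H(p)$. Exactly as in \eqref{ricca}, the Bochner formula together with the Cauchy–Schwarz bound $|\mathrm{Hess}\,r|^2\ge (\Delta r)^2/(n-1)$ gives
\[
\frac{(\Delta r)^2}{n-1}+\frac{\partial}{\partial r}(\Delta r)+\mathrm{Ric}(\partial r,\partial r)\le 0
\]
for $r\in[0,\tau(p))$.

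Next I would feed in the curvature hypothesis. Since $\mathrm{Ric}_f^m=\mathrm{Ric}+\mathrm{Hess}\,f-\frac{df\otimes df}{m-n}\ge 0$, evaluation at $\partial r$ yields $\mathrm{Ric}(\partial r,\partial r)\ge -f''(r)+\frac{(f'(r))^2}{m-n}$, where $f'(r)=\partial_r f$ and $f''(r)=\mathrm{Hess}\,f(\partial r,\partial r)$. Substituting this and recalling $\Delta_f r=\Delta r-f'(r)$, so that $\frac{\partial}{\partial r}(\Delta r)-f''(r)=\frac{\partial}{\partial r}(\Delta_f r)$, I obtain
\[
\frac{(\Delta r)^2}{n-1}+\frac{(f'(r))^2}{m-n}+\frac{\partial}{\partial r}(\Delta_f r)\le 0.
\]
The crucial algebraic step is the elementary (Cauchy–Schwarz, Engel form) inequality
\[
\frac{(\Delta r)^2}{n-1}+\frac{(f'(r))^2}{m-n}\ge\frac{(\Delta r-f'(r))^2}{m-1}=\frac{(\Delta_f r)^2}{m-1},
\]
valid for $m>n$ (and trivial when $m=n$, where $f$ is constant). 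This is precisely where the term $-\frac{df\otimes df}{m-n}$ earns its keep: it converts the genuinely $n$-dimensional Riccati term into the $m$-dimensional one, absorbing the first-order drift $f'$. Combining the last two displays gives the clean differential inequality
\[
\frac{\partial}{\partial r}(\Delta_f r)+\frac{(\Delta_f r)^2}{m-1}\le 0
\qquad\text{for }r\in[0,\tau(p)).
\]

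Then I would solve the associated Riccati equation and invoke a comparison argument. The solution of $\psi'+\frac{\psi^2}{m-1}=0$ with the initial datum $\psi(0)=\Delta_f r|_{r=0}=H(p)-g(\nabla f,\nu)(p)=H_f(p)$ is $\psi(r)=\frac{(m-1)H_f(p)}{m-1+H_f(p)r}$, so the standard Riccati comparison yields $\Delta_f r\le\psi(r)$, which is the claimed estimate. Finally, for the cut-locus bound I would note that when $H_f(p)<0$ the comparison function $\psi(r)$ decreases to $-\infty$ as $r\uparrow\frac{m-1}{-H_f(p)}=\frac{m-1}{H_f^{-}(p)}$; since $\Delta_f r$ stays finite (indeed smooth) for $r<\tau(p)$, the geodesic $\gamma_p$ cannot remain minimizing beyond $\frac{m-1}{H_f^{-}(p)}$, giving $\tau(p)\le\frac{m-1}{H_f^{-}(p)}$ (the bound being vacuous when $H_f(p)\ge 0$). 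I expect the only genuinely delicate point to be the verification of the algebraic inequality together with the bookkeeping that recasts $\frac{\partial}{\partial r}(\Delta r)-f''$ as $\frac{\partial}{\partial r}(\Delta_f r)$; everything else is the classical Riccati comparison, now painless because the $m$-reduction removes all explicit $f$-dependence from the differential inequality — in contrast to Lemma~\ref{lem}, where the surviving $f''$ forced the integration-by-parts bookkeeping leading to \eqref{meagen}.
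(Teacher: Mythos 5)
Your proposal is correct and follows essentially the same route as the paper: the Cauchy--Schwarz bound $|\mathrm{Hess}\,r|^2\ge(\Delta r)^2/(n-1)$ followed by the Engel-form inequality $\frac{(\Delta r)^2}{n-1}+\frac{(f')^2}{m-n}\ge\frac{(\Delta_f r)^2}{m-1}$, yielding the clean Riccati inequality $\partial_r(\Delta_f r)+\frac{(\Delta_f r)^2}{m-1}\le 0$ with initial value $H_f(p)$. The only cosmetic difference is that you start from the unweighted Bochner formula and convert to $f$-quantities by hand, while the paper starts directly from the generalized Bochner formula for $\Delta_f$ (and you additionally spell out the cut-locus bound $\tau(p)\le\frac{m-1}{H_f^-(p)}$, which the paper leaves implicit); the substance is identical.
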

\begin{proof}[Proof of Lemma \ref{lemv2}]
Applying the generalized Bochner formula to the distance function $r(x)=d(p,x)$,
we get that
\begin{align*}
0=\frac{1}{2}\Delta_f|\nabla r|^2&=|\mathrm{Hess}\,r|^2
+\langle \nabla r,\nabla\Delta_f r\rangle+\mathrm{Ric}_f(\nabla r,\nabla r)\\
&\ge\frac{(\Delta r)^2}{n-1}+\frac{\partial}{\partial r}(\Delta_f r)
+\mathrm{Ric}_f(\nabla r,\nabla r)\\
&=\frac{(\Delta_f r+\langle\nabla f,\nabla r\rangle)^2}{n-1}
+\frac{\partial}{\partial r}(\Delta_f r)+\mathrm{Ric}_f(\nabla r,\nabla r)\\
&\ge\frac{(\Delta_f r)^2}{m-1}+\frac{\partial}{\partial r}(\Delta_f r)
+\mathrm{Ric}^m_f(\nabla r,\nabla r)\\
&\ge\frac{(\Delta_f r)^2}{m-1}+\frac{\partial}{\partial r}(\Delta_f r)
\end{align*}
for $r\in[0,\tau(p))$, where we used the Cauchy-Schwarz inequality in the
second inequality and we used $\mathrm{Ric}^m_f\ge 0$ in the third inequality.
Considering the initial value $\Delta_f r|_{r=0}=H_f(p)$, we solve the above
inequality and give the desired result.
\end{proof}

In the following, we show that Lemma \ref{lem} implies that weighted relative
volumes $\mathrm{RV}_f(\Omega)$ and $\overline{\mathrm{RV}}_f(\Omega)$ given
in introduction are both well defined.

\begin{proposition}\label{welldef}
On a weighted manifold $(M^n,g,e^{-f}dv_g)$ with $\mathrm{Ric}_f\ge 0$, let
$\Omega\subset M^n$ be a bounded open set with smooth boundary $\partial\Omega$.
The weighted mean curvature $H_f$ of $\partial\Omega$ is nonnegative everywhere.

(a) If \eqref{assump2} holds, then
\[
\lim_{r\to\infty}\frac{\mathrm{Vol}_f\{x\in M|d(x,\Omega)<r\}}{m(r)},
\]
where $m(r):=n |\mathbb{B}^n|\int^r_0e^{at}t^{n-1}dt$, exists. Hence
$\mathrm{RV}_f(\Omega)$ is well defined.

(b) If \eqref{assump2b} holds, then
\[
\lim_{r\to\infty}\frac{\mathrm{Vol}_f\{x\in M|d(x,\Omega)<r\}}
{|\mathbb{B}^{n+4k}|r^{n+4k}},
\]
exists. Hence $\overline{\mathrm{RV}}_f(\Omega)$ is well defined.
\end{proposition}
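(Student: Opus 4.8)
The plan is to prove both parts by a Bishop--Gromov-type monotonicity argument, adapted to the weighted measure and to the distance from $\partial\Omega$, and then to deduce existence of each limit from the fact that the relevant weighted volume ratio is nonincreasing and bounded below by zero.

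First I would record the coordinate description of the quantity under the limit. Since $d(x,\Omega)=d(x,\partial\Omega)$ for $x\in M\setminus\Omega$, writing $V(r):=\mathrm{Vol}_f\{x\in M\,|\,d(x,\Omega)<r\}$ and using that $\Phi$ is a diffeomorphism onto $(M\setminus\Omega)\setminus C(\partial\Omega)$ together with $|C(\partial\Omega)|=0$, one gets
\[
V(r)=|\Omega|_f+\int_{\partial\Omega}\int_0^{\min\{r,\tau(p)\}}\mathcal{A}_f(s,p)\,ds\,d\sigma(p).
\]
Extending $\mathcal{A}_f(\cdot,p)$ by zero for $s\ge\tau(p)$ and setting $\bar A(r):=\int_{\partial\Omega}\mathcal{A}_f(r,p)\,d\sigma(p)$, this reads $V(r)=|\Omega|_f+\int_0^r\bar A(s)\,ds$. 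The essential analytic input is the identity $\Delta_f r=\partial_r\log\mathcal{A}_f(r,p)$, valid for $r\in(0,\tau(p))$, which turns the comparison of Lemma~\ref{lem} into a differential inequality for $\log\mathcal{A}_f$.

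The key step is to obtain a pointwise monotonicity against a model density that does \emph{not} depend on $p$. For part (a) I would observe that since $H_f(p)\ge0$ one has $\frac{H_f(p)}{n-1+H_f(p)r}\le\frac1r$, so Lemma~\ref{lem}(a), after discarding the nonpositive term $-\frac{(n-1)^2a}{(n-1+H_f(p)r)^2}$, yields the clean bound $\Delta_f r\le a+\frac{n-1}{r}$. Comparing with $\partial_r\log(e^{ar}r^{n-1})=a+\frac{n-1}{r}$ shows that $r\mapsto \frac{\mathcal{A}_f(r,p)}{e^{ar}r^{n-1}}$ is nonincreasing on $(0,\tau(p))$, and extension by zero across $\tau(p)$ (a downward jump in the numerator being harmless) keeps it nonincreasing on all of $(0,\infty)$. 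For part (b) the same reasoning applied to Lemma~\ref{lem}(b) gives $\Delta_f r\le\frac{n-1+4k}{r}$ and hence that $r\mapsto\frac{\mathcal{A}_f(r,p)}{r^{\,n-1+4k}}$ is nonincreasing. Integrating over $p\in\partial\Omega$, the model factor being $p$-independent, shows that $\frac{\bar A(r)}{m'(r)}$ is nonincreasing, with $m'(r)=n|\mathbb{B}^n|e^{ar}r^{n-1}$ in case (a), and likewise that $\frac{\bar A(r)}{\mu'(r)}$ is nonincreasing with $\mu(r):=|\mathbb{B}^{n+4k}|r^{n+4k}$ in case (b).

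Finally I would upgrade monotonicity of the densities to monotonicity of the volume ratio. By the elementary ratio lemma --- if $\varphi/\psi$ is nonincreasing with $\varphi,\psi>0$ then $r\mapsto\frac{\int_0^r\varphi}{\int_0^r\psi}$ is nonincreasing, since $\frac{\int_0^r\varphi}{\int_0^r\psi}\ge\frac{\varphi(r)}{\psi(r)}$ --- the nonincreasing ratio $\bar A/m'$ forces $\frac{V(r)-|\Omega|_f}{m(r)}$ to be nonincreasing; as $m$ is increasing, $\frac{|\Omega|_f}{m(r)}$ is nonincreasing as well, so $\frac{V(r)}{m(r)}$ is a sum of two nonincreasing functions and hence nonincreasing, and the same holds with $\mu$ in place of $m$. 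Being nonnegative, each ratio converges as $r\to\infty$, which is exactly the existence claimed for $\mathrm{RV}_f(\Omega)$ and $\overline{\mathrm{RV}}_f(\Omega)$. The main point requiring care is the passage across the focal locus $C(\partial\Omega)$: one must check measurability in $p$ of the extended integrand (using continuity of $\tau$ and smoothness of $\mathcal{A}_f$ on $E$, so that Fubini and integration of the pointwise inequalities are legitimate) and confirm that the zero-extension preserves the pointwise monotonicity; beyond this, every inequality is a direct consequence of Lemma~\ref{lem} together with the bound $\frac{H_f(p)}{n-1+H_f(p)r}\le\frac1r$.
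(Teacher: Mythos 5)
Your argument is correct, but it takes a genuinely different route from the paper's. The paper compares $\mathcal{A}_f(r,p)$ against the $p$-\emph{dependent} model $e^{ar}\bigl(1+\tfrac{H_f(p)}{n-1}r\bigr)^{n-1}$ (resp.\ $\bigl(1+\tfrac{H_f(p)}{n-1}r\bigr)^{n-1+4k}$), shows for each fixed $p$ that $\kappa_1(p)=\lim_{r\to\infty}\bar{\mathcal{A}}_f(r,p)/m'(r)$ exists, and then passes from these pointwise limits to the limit of the integrated quantity via a uniform bound, the dominated convergence theorem and L'Hopital's rule, ending with the representation $\mathrm{RV}_f(\Omega)=\int_\Sigma\kappa_1(p)\,d\sigma(p)$. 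You instead coarsen the comparison using $\frac{H_f(p)}{n-1+H_f(p)r}\le\frac1r$ to obtain a $p$-\emph{independent} model ($e^{ar}r^{n-1}$, resp.\ $r^{n-1+4k}$), which lets you integrate the pointwise monotonicity over $p\in\Sigma$ and then apply the elementary ratio lemma to conclude that the full volume ratio $V(r)/m(r)$ is itself nonincreasing, hence convergent. Your route buys three things: it is more elementary (no dominated convergence, no L'Hopital), it proves the stronger Bishop--Gromov-type statement that the ratio is monotone rather than merely convergent, and it sidesteps a small blemish in the paper's proof --- the asserted uniform bound $\bar{\mathcal{A}}_f(r,p)\le c_1 m'(r)$ cannot hold near $r=0$, where $m'(r)\to0$ while $\mathcal{A}_f(0,p)=e^{-f(p)}>0$, and is only valid for $r$ bounded away from $0$ (which is all the paper actually needs). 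What the paper's route buys is that the sharp $p$-dependent density $\theta_f(r,p)$, its monotonicity, and the identity $\mathrm{RV}_f(\Omega)=\int_\Sigma\kappa_1(p)\,d\sigma(p)$ are precisely the objects reused in the proofs of the Willmore inequality \eqref{anWill2} and its rigidity statement in Theorem \ref{Ricf2}, where your coarser bound $\Delta_f r\le a+\frac{n-1}{r}$ would not suffice; so the paper's proof doubles as setup for the main theorem, while yours is self-contained for the proposition alone.
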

\begin{proof}[Proof of Proposition \ref{welldef}]
We set $\bar{\mathcal{A}}_f(r,p)=\mathcal{A}_f(r,p)$ for $r<\tau(p)$
and $\bar{\mathcal{A}}_f(r,p)=0$ for $r\ge \tau(p)$.

At first we prove part (a). We claim that
\[
\kappa_1(p):=\lim_{r\to+\infty}\frac{\bar{\mathcal{A}}_f(r,p)}{m'(r)}
\]
exists, where $m(r):=n |\mathbb{B}^n|\int^r_0e^{at}t^{n-1}dt$. In fact, for
the case $r\ge\tau(p)$, the claim is obvious. We only consider the case $r<\tau(p)$.
Recalling that $\Delta r=(\ln\mathcal{A})^{\prime}$ and
$\mathcal{A}_f(r,p)=e^{-f}\mathcal{A}(r,p)$, then
$\Delta_f r=(\ln\mathcal{A}_f)^{\prime}$. By Lemma \ref{lem}(a), we see that
\begin{equation}
\begin{aligned}\label{compdic}
\frac{\mathcal{A}^{\prime}}{\mathcal{A}}&\le\frac{(n-1)H_f(p)}{n-1+H_f(p)r}+a
-\frac{(n-1)^2a}{\left[n-1+H_f(p)r\right]^2}\\
&\le\frac{(n-1)H_f(p)}{n-1+H_f(p)r}+a
\end{aligned}
\end{equation}
for $r<\tau(p)$, where we discarded the third non-positive term in the second inequality.
Hence, for each $p\in\partial\Omega$,
\[
\theta_f(r,p):=\frac{\mathcal{A}_f(r,p)}{e^{ar}\left(1+\frac{H_f(p)}{n-1}r\right)^{n-1}}
\]
is non-increasing and bounded on $[0,\tau(p))$, which implies that
$\lim_{r\to\infty}\theta_f(r,p)$ exists. We also see that
\[
\lim_{r\to\infty}\frac{e^{ar}\left(1+\frac{H_f(p)}{n-1}r\right)^{n-1}}{m'(r)}
=\lim_{r\to\infty}\frac{e^{ar}\left(1+\frac{H_f(p)}{n-1}r\right)^{n-1}}{n|\mathbb{B}^n|e^{ar}r^{n-1}}
=\frac{1}{n|\mathbb{B}^n|}\left(\frac{H_f(p)}{n-1}\right)^{n-1}
\]
exists. Combining two aspects above indicates that
$\lim_{r\to\infty}\frac{\mathcal{A}_f(r,p)}{m'(r)}$ exists. Therefore
the claim follows. From this claim, we see that there exists a positive
constant $c_1$ such that $\bar{\mathcal{A}}_f(r,p)\le c_1m'(r)$. Thus,
\[
\frac{\int^r_0\bar{\mathcal{A}}_f(t,p)dt}{m(r)}\le\frac{\int^r_0c_1m'(t)dt}{m(r)}=c_1.
\]
Using this upper bound, by the Lebesgue dominated convergence theorem and the
L'Hopital rule, we have that
\begin{align*}
\mathrm{RV}_f(\Omega)&:=\lim_{r\to\infty}\frac{\mathrm{Vol}_f\{x\in M|d(x,\Omega)<r\}}{m(r)}\\
&=\lim_{r\to\infty}\frac{\int_{\Sigma}\int^r_0\bar{\mathcal{A}}_f(t,p)dtd\sigma(p)}{m(r)}\\
&=\int_{\Sigma}\lim_{r\to\infty}\frac{\bar{\mathcal{A}}_f(r,p)}{m'(r)}d\sigma(p)\\
&=\int_{\Sigma}\kappa_1(p)d\sigma(p).
\end{align*}
This indicates that $\mathrm{RV}_f(\Omega)$ is well defined.

We then prove part (b). In this case, we claim that
\[
\kappa_2(p):=\lim_{r\to\infty}\frac{\bar{\mathcal{A}}_f(r,p)}{(n+4k)|\mathbb{B}^{n+4k}|r^{n-1+4k}}
\]
exists. Indeed, for the case $r\ge\tau(p)$, it is obvious. We only consider
the case $r<\tau(p)$. By Lemma \ref{lem}(b), for each $p\in\partial\Omega$,
we see that
\[
\overline{\theta}_f(r,p):=\frac{\bar{\mathcal{A}}_f(r,p)}{\left(1+\frac{H_f(p)}{n-1}r\right)^{n-1+4k}}
\]
is non-increasing and bounded on $[0,\tau(p))$, which implies that
$\lim_{r\to\infty}\overline{\theta}_f(r,p)$ exists. We also see that
\[
\lim_{r\to\infty}\frac{\left(1+\frac{H_f(p)}{n-1}r\right)^{n-1+4k}}{(n+4k)|\mathbb{B}^{n+4k}|r^{n-1+4k}}
=\frac{\left(\frac{H_f(p)}{n-1}\right)^{n-1+4k}}{(n+4k)|\mathbb{B}^{n+4k}|}
\]
exists. Combining the above two aspects gives that
$\lim_{r\to\infty}\frac{\mathcal{A}_f(r,p)}{(n+4k)|\mathbb{B}^{n+4k}|r^{n-1+4k}}$
exists. Hence the claim follows. By this claim, there exists a positive constant
$c_2$ such that $\bar{\mathcal{A}}_f(r,p)\le c_2(n+4k)|\mathbb{B}^{n+4k}|r^{n-1+4k}$.
Thus,
\[
\frac{\int^r_0\bar{\mathcal{A}}_f(t,p)dt}{|\mathbb{B}^{n+4k}|r^{n+4k}}
\le\frac{\int^r_0c_2(n+4k)|\mathbb{B}^{n+4k}|t^{n-1+4k}dt}{|\mathbb{B}^{n+4k}|r^{n+4k}}=c_2.
\]
Using this upper bound, by the Lebesgue dominated convergence theorem and the
L'Hopital rule, we have that
\begin{align*}
\overline{\mathrm{RV}}_f(\Omega)&:=\lim_{r\to\infty}
\frac{\mathrm{Vol}_f\{x\in M|d(x,\Omega)<r\}}{|\mathbb{B}^{n+4k}|r^{n+4k}}\\
&=\lim_{r\to\infty}\frac{\int_{\Sigma}\int^r_0\bar{\mathcal{A}}_f(t,p)dtd\sigma(p)}{|\mathbb{B}^{n+4k}|r^{n+4k}}\\
&=\int_{\Sigma}\lim_{r\to\infty}\frac{\bar{\mathcal{A}}_f(r,p)}{(n+4k)|\mathbb{B}^{n+4k}|r^{n-1+4k}}d\sigma(p)\\
&=\int_{\Sigma}\kappa_2(p)d\sigma(p).
\end{align*}
This shows that $\overline{\mathrm{RV}}_f(\Omega)$ is well defined.
\end{proof}

Ichida \cite{Ic} and  Kasue \cite{Kas} proved that if a compact connected
manifold $M^n$ with mean convex boundary $\Sigma$ has nonnegative Ricci
curvature, then $\Sigma$ has at most two components; moreover if $\Sigma$
has two components, then $M^n$ is isometric to $N\times [0,l]$ for some
connected compact $(n-1)$-manifold $N$ and some constant $l>0$. See also an
alternative proof in \cite{HW}. Now we generalize this result to the weighted
manifold. To achieve it, we need a generalized Reilly formula \cite{KM},
which states that for any $u\in C^\infty(M)$,
\begin{equation}
\begin{aligned}\label{reif}
\int_M(\Delta_fu)^2d\mu&-\int_M|\mathrm{Hess}\,u|^2 d\mu
-\int_M\mathrm{Ric}_f(\nabla u, \nabla u)d\mu\\
&=2\int_{\Sigma}u_\nu({\Delta_f}_{\Sigma}z)d\mu_{\Sigma}
+\int_{\Sigma} H_f(u_\nu)^2 d\mu_{\Sigma}
+\int_{\Sigma} h(\nabla_{\partial\Omega}z,\nabla_{\Sigma}z)d\mu_{\Sigma},
\end{aligned}
\end{equation}
where $z=u|_{\Sigma}$, $d\mu=e^{-f}dv$ and $d\mu_{\Sigma}=d\mu|_{\Sigma}$.
By this formula, we can apply Hang-Wang's argument \cite{HW} to get the
following result, which will be used in the proof of Theorem \ref{Ricf2}.
\begin{proposition}\label{num}
Let $(M^n,g,e^{-f}dv)$ be a compact connected weighted $n$-manifold
with $\mathrm{Ric}_f\ge 0$. If the boundary $\Sigma$ of $M^n$ has
$H_f\ge0$, then $\Sigma$ has at most two components. Moreover, if
$\Sigma$ has two components, then $\Sigma$ is totally geodesic and $M^n$
is isometric to
\[
N\times [0,l]
\]
for some constant $l>0$, where $N$ is a connected compact $(n-1)$-manifold.
\end{proposition}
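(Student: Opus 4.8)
The plan is to carry the Reilly-formula rigidity argument of Hang--Wang \cite{HW} into the weighted setting, using the generalized Reilly formula \eqref{reif} in place of the classical one. Suppose that $\Sigma$ has at least two components, single out one of them, say $\Sigma_0$, and write $\Sigma'=\Sigma\setminus\Sigma_0$ for the (nonempty) union of the remaining components. Since $\Delta_f$ is a second-order uniformly elliptic operator with no zeroth-order term, self-adjoint with respect to $d\mu=e^{-f}dv$, the Dirichlet problem
\[
\Delta_f u=0 \ \text{ in } M,\qquad u|_{\Sigma_0}=0,\qquad u|_{\Sigma'}=1
\]
has a smooth solution $u$, and the strong maximum principle gives $0<u<1$ in the interior, so $u$ is nonconstant. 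This $u$ is what I would feed into \eqref{reif}.

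The key algebraic step is that the boundary data is locally constant. Since $z=u|_\Sigma$ equals $0$ on $\Sigma_0$ and $1$ on $\Sigma'$, we have $\nabla_\Sigma z\equiv0$ and $\Delta_{f,\Sigma}z\equiv0$, so the first and third boundary integrals in \eqref{reif} vanish. Using $\Delta_f u=0$, the formula collapses to
\[
-\int_M|\mathrm{Hess}\,u|^2\,d\mu-\int_M\mathrm{Ric}_f(\nabla u,\nabla u)\,d\mu=\int_\Sigma H_f\,u_\nu^2\,d\mu_\Sigma.
\]
The hypotheses $\mathrm{Ric}_f\ge0$ and $H_f\ge0$ make the left side $\le0$ and the right side $\ge0$, so both vanish identically. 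In particular $\mathrm{Hess}\,u\equiv0$; since $u$ is nonconstant this forces $|\nabla u|\equiv c$ for some constant $c>0$, so $\nu:=\nabla u/c$ is a nowhere-vanishing parallel unit vector field, orthogonal to the level sets of $u$. We also record that $u_\nu=\pm c\neq0$ on $\Sigma$, whence $H_f\equiv0$ on $\Sigma$.

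Next I would extract the splitting. A parallel vector field is Killing, so the flow of $\nu$ acts by isometries; moreover its integral curves are geodesics (as $\nabla_{\nabla u}\nabla u=0$) meeting each level set orthogonally. Because $u$ ranges exactly over $[0,1]$ with no interior critical points, each point of $M$ lies on a unique such unit-speed geodesic running from $\Sigma_0$ to $\Sigma'$, and these have common length $l=1/c$. Flowing $N:=\Sigma_0$ along $\nu$ therefore identifies $(M,g)$ isometrically with the product $(N\times[0,l],\,g_N+dr^2)$, in which every slice, in particular both boundary faces, is totally geodesic. Under this identification $\{u=0\}=\Sigma_0=N\times\{0\}$ and $\{u=1\}=\Sigma'=N\times\{l\}$. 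Since $M$ is connected, $N=\Sigma_0$ is connected, hence $N\times\{l\}$ is connected, so $\Sigma'$ is a single boundary component. Thus $\Sigma$ has exactly two components, both totally geodesic, and $M\cong N\times[0,l]$ with $N$ a connected compact $(n-1)$-manifold. This proves both assertions at once: if $\Sigma$ has $\ge2$ components it has exactly $2$, so it has at most two, and the two-component case yields the stated rigidity.

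The main obstacle I anticipate is the passage from $\mathrm{Hess}\,u\equiv0$ to the \emph{global isometric} product: one must verify that the flow of $\nu$ is defined for all $r\in[0,l]$ up to the boundary, that it carries the slices to one another isometrically (not merely diffeomorphically), and that connectedness of $M$ genuinely forces the level set $N$---and therefore $\Sigma'$---to be connected. The reassuring point is that the weighted structure is harmless here: the condition $\mathrm{Hess}\,u\equiv0$ involves only the metric $g$, so the splitting is the purely Riemannian de Rham-type argument of \cite{HW}, and the weight enters only through the sign bookkeeping in the Reilly identity above.
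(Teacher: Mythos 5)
Your proposal is correct and follows essentially the same route as the paper's own proof: the same Dirichlet problem $\Delta_f u=0$ with locally constant boundary data, the generalized Reilly formula \eqref{reif} collapsing (by sign considerations from $\mathrm{Ric}_f\ge0$ and $H_f\ge0$) to force $\mathrm{Hess}\,u\equiv0$ and $|\nabla u|\equiv c>0$, and then the flow of the parallel field $\nabla u/c$ to obtain the totally geodesic boundary and the product splitting $N\times[0,l]$ with $l=1/c$. The only differences are cosmetic: you make explicit the nonconstancy of $u$ via the strong maximum principle and the connectedness bookkeeping for $\Sigma\setminus\Sigma_0$, which the paper treats more briefly.
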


\begin{proof}[Proof of Proposition \ref{num}]
Assume that $\Sigma$ is not connected. Fixing a component $\Sigma_0$ of $\Sigma$,
we solve the Dirichlet problem
\begin{equation*}
\left\{
\begin{aligned}
\Delta_f u=0 \quad & \text{in}\quad M,\\
u|_{\Sigma_0}=0 \quad & \text{and}\quad u|_{\Sigma\backslash\Sigma_0}=1.
\end{aligned}
\right.
\end{equation*}
Applying \eqref{reif} to function $u$, we have
\[
-\int_M|\mathrm{Hess}\,u|^2 d\mu=\int_M\mathrm{Ric}_f(\nabla u, \nabla u)d\mu
+\int_{\Sigma} H_f(u_\nu)^2 d\mu_{\Sigma}.
\]
Since $\mathrm{Ric}_f\ge 0$ and $H_f\ge0$, then $\mathrm{Hess}\,u=0$,
which implies $|\nabla u|=k$ for some constant $k>0$. Since
$\nabla u=-k\nu$ on $\Sigma_0$ and $\nabla u=k\nu$ on
$\Sigma\backslash\Sigma_0$, we have
\[
\nabla_X\nu=0
\]
for any $X\in T\Sigma$, that is, $\Sigma$ is totally geodesic. If we consider
the flow generated by $\nabla u/k$, then the flow starts from $\Sigma_0$ and reaches
$\Sigma\backslash\Sigma_0$ at time $1/k$ and hence $\Sigma$ has exactly
two components. Indeed the flow lines are just geodesics.
If we fix local coordinates $\{x_1,\cdots,x_{n-1}\}$ on $\Sigma_0$
and let $r=u/k$, then
\[
g=dr^2+g_{ij}(r,x)dx^idx^j.
\]
Noticing $\mathrm{Hess}\,r=0$, so $\partial_rg_{ij}(r,x)=0$. Therefore
$M^n$ is isometric to $\Sigma_0\times[0,1/k]$.
\end{proof}

The Reilly formula \eqref{reif} can be also used to prove the weighted
Heintze-Karcher inequality, extending the manifold case in \cite{HK, Ro}.
We remark that the same inequality has been obtained in \cite{BCP}.
For the discussion convenience in the proof of Theorem \ref{iso2} below,
we give a detailed proof.
\begin{proposition}[\cite{BCP}]\label{inteinequ}
Let $(M^n,g,e^{-f}dv)$ be a compact weighted $n$-manifold with smooth boundary
$\Sigma$ satisfying $\mathrm{Ric}^m_f\ge0$. If the weighted
mean curvature $H_f$ of $\Sigma$ is positive
everywhere, then
\begin{eqnarray}\label{ros}
\int_{\Sigma} \frac{1}{H_f}d\mu_{\Sigma}\ge\frac{m}{m-1}|M^n|_f.
\end{eqnarray}
Moreover, the equality of \eqref{ros} holds if and only if $m=n$
($f$ is constant) and $M^n$ is isometric to an Euclidean $n$-ball.
\end{proposition}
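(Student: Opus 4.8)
The plan is to follow Ros's argument via the generalized Reilly formula \eqref{reif}. First I would solve the mixed boundary value problem
\[
\Delta_f u=1 \quad\text{in }M,\qquad u=0\quad\text{on }\Sigma,
\]
whose solution exists and is smooth up to $\Sigma$ by standard elliptic theory for the self-adjoint operator $\Delta_f$. Since $z=u|_\Sigma\equiv 0$, the gradient and the boundary $f$-Laplacian of $z$ vanish, so the first and third boundary terms in \eqref{reif} drop out; using $\Delta_f u\equiv1$ on the left this leaves
\[
|M^n|_f-\int_M|\mathrm{Hess}\,u|^2\,d\mu-\int_M\mathrm{Ric}_f(\nabla u,\nabla u)\,d\mu=\int_\Sigma H_f\,u_\nu^2\,d\mu_\Sigma.
\]

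Next I would pass from the $\infty$- to the $m$-Bakry--\'Emery curvature. From $\mathrm{Ric}_f=\mathrm{Ric}_f^m+\frac{df\otimes df}{m-n}$ and $\mathrm{Ric}_f^m\ge0$ one gets $\mathrm{Ric}_f(\nabla u,\nabla u)\ge\frac{\langle\nabla f,\nabla u\rangle^2}{m-n}$. The key pointwise estimate is the dimensional Cauchy--Schwarz inequality
\[
|\mathrm{Hess}\,u|^2+\frac{\langle\nabla f,\nabla u\rangle^2}{m-n}\ge\frac{(\Delta u)^2}{n}+\frac{\langle\nabla f,\nabla u\rangle^2}{m-n}\ge\frac{(\Delta_f u)^2}{m}=\frac1m,
\]
where the first step is the trace inequality $|\mathrm{Hess}\,u|^2\ge(\Delta u)^2/n$ and the second follows from the elementary identity $\frac{a^2}{n}+\frac{b^2}{m-n}-\frac{(a-b)^2}{m}=\frac{[(m-n)a+nb]^2}{nm(m-n)}\ge0$ with $a=\Delta u$, $b=\langle\nabla f,\nabla u\rangle$ (for $m=n$, $f$ is constant and this reduces to the classical trace inequality). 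Inserting both bounds into the Reilly identity gives $\int_\Sigma H_f\,u_\nu^2\,d\mu_\Sigma\le\frac{m-1}{m}|M^n|_f$. Finally, the weighted divergence theorem yields $|M^n|_f=\int_M\Delta_f u\,d\mu=\int_\Sigma u_\nu\,d\mu_\Sigma$, and splitting $u_\nu=(\sqrt{H_f}\,u_\nu)\cdot H_f^{-1/2}$ (legitimate since $H_f>0$), Cauchy--Schwarz on $\Sigma$ gives
\[
|M^n|_f^2\le\Big(\int_\Sigma H_f\,u_\nu^2\,d\mu_\Sigma\Big)\Big(\int_\Sigma\frac{1}{H_f}\,d\mu_\Sigma\Big)\le\frac{m-1}{m}\,|M^n|_f\int_\Sigma\frac{1}{H_f}\,d\mu_\Sigma,
\]
which rearranges to \eqref{ros}.

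For the rigidity statement I would trace back every inequality. Equality forces: (i) $\mathrm{Hess}\,u=\frac{\Delta u}{n}g$; (ii) $(m-n)\Delta u+n\langle\nabla f,\nabla u\rangle=0$; (iii) $\mathrm{Ric}_f^m(\nabla u,\nabla u)=0$; and (iv) $H_f\,u_\nu$ constant on $\Sigma$. Combining (i), (ii) with $\Delta_f u=1$ gives $\mathrm{Hess}\,u=\frac1m g$ on $M$ and $\langle\nabla f,\nabla u\rangle\equiv-\frac{m-n}{m}$. The equation $\mathrm{Hess}\,u=\frac1m g$ is a concircular (Obata--Tashiro type) condition: the Ricci identity gives $R(\cdot,\cdot)\nabla u=0$, the level sets of $u$ are totally umbilic round spheres, and expressing $g$ in geodesic polar coordinates about the unique interior minimum $p_0$ of $u$ forces $g=dr^2+r^2g_{\mathbb{S}^{n-1}}$; hence $(M^n,g)$ is isometric to a Euclidean ball and $\nabla u=x/m$ in the resulting chart. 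Then $\langle\nabla f,\nabla u\rangle\equiv-\frac{m-n}{m}$ reads $r\,\partial_r f\equiv-(m-n)$, so $f=-(m-n)\log r+\text{const}$, which can be smooth at the center $p_0$ only when $m=n$; and $m=n$ forces $f$ constant. The converse is immediate, since on a Euclidean $n$-ball with $f$ constant $H_f=H\equiv(n-1)/R$ and $\int_\Sigma H^{-1}\,d\sigma=\frac{n}{n-1}|M^n|$. The main obstacle is precisely this rigidity step: converting the concircular equation into the Euclidean-ball conclusion and then using smoothness of $f$ at the interior minimum of $u$ to rule out $m>n$; the existence and up-to-boundary regularity of $u$ in the first step is standard but must also be recorded.
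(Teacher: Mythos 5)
Your derivation of the inequality \eqref{ros} is essentially identical to the paper's: the same Dirichlet problem $\Delta_f u=1$, $u|_\Sigma=0$, the same application of the Reilly formula \eqref{reif}, the same pointwise dimensional Cauchy--Schwarz bound (your identity $\tfrac{a^2}{n}+\tfrac{b^2}{m-n}-\tfrac{(a-b)^2}{m}=\tfrac{[(m-n)a+nb]^2}{nm(m-n)}$ is just an explicit form of the paper's estimate \eqref{Hess}, with the $\langle\nabla f,\nabla u\rangle^2/(m-n)$ term moved to the curvature side so that $\mathrm{Ric}_f$ becomes $\mathrm{Ric}_f^m$), and the same boundary Cauchy--Schwarz step. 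Where you genuinely diverge is the rigidity. The paper rules out $m>n$ by a short integral argument: the equality conditions give $\Delta u+\frac{n}{m-n}\langle\nabla f,\nabla u\rangle=0$, i.e. $\mathrm{div}\big(e^{\frac{nf}{m-n}}\nabla u\big)=0$; pairing this with $u$ and integrating by parts (using $u|_\Sigma=0$) forces $\nabla u\equiv 0$, contradicting $\Delta_f u=1$. Once $m=n$ and $f$ is constant, the paper simply cites the equality case of Ros's theorem for the ball conclusion. You instead run the Obata--Tashiro analysis on the concircular equation $\mathrm{Hess}\,u=\frac1m g$ to construct the Euclidean ball directly, and exclude $m>n$ because $r\,\partial_r f\equiv-(m-n)$ would force $f$ to blow up logarithmically at the interior minimum of $u$, contradicting smoothness. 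Your route is sound and self-contained (it essentially inlines the Obata-type argument hidden inside Ros's rigidity), but it is longer, and your sketch leaves as unwritten standard details the concircular-to-ball step, the existence and uniqueness of the interior critical point of $u$, and the fact that integrating $r\,\partial_r f=-(m-n)$ only yields $f=-(m-n)\log r+C(\theta)$ with a possible angular term (harmless for the blow-up conclusion, but your ``$+\,\mathrm{const}$'' is imprecise). The paper's contradiction argument is shorter and needs no geometric machinery; yours buys an explicit description of the equality configuration without invoking Ros's rigidity as a black box.
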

\begin{proof}[Proof of Proposition \ref{inteinequ}]
The proof is analogous to Theorem 1 in \cite{Ro}, which originates from
\cite{Re}. Let $u\in C^\infty(M^n)$ be a smooth (non-constant) function
of the Dirichlet problem
\begin{equation}\label{Diri}
\left\{
\begin{aligned}
\Delta_f u=1 \quad & \text{in}\quad M^n,\\
u|_{\Sigma}=z=0 \quad & \text{on}\quad \Sigma.
\end{aligned}
\right.
\end{equation}
Using \eqref{reif}, we have
\begin{equation}\label{relfor}
\int_M(\Delta_fu)^2 d\mu=\int_M|\mathrm{Hess}\,u|^2d\mu
+\int_M\mathrm{Ric}_f(\nabla u, \nabla u)d\mu
+\int_{\Sigma} H_f(u_\nu)^2 d\mu_{\Sigma}.
\end{equation}
Using the Cauchy-Schwarz inequality, we have that
\begin{equation}
\begin{aligned}\label{Hess}
|\mathrm{Hess}\,u|^2&\ge\frac{1}{n}(\Delta u)^2\\
&=\frac{1}{n}(\Delta_f u+\langle\nabla f,\nabla u\rangle)^2\\
&\ge\frac{1}{n}\left[\frac{|\Delta_f u|^2}{1+\tfrac{m-n}{n}}
-\frac{\langle\nabla f,\nabla u\rangle^2}{\tfrac{m-n}{n}}\right]\\
&=\frac{(\Delta_f u)^2}{m}-\frac{\langle\nabla f,\nabla u\rangle^2}{m-n},
\end{aligned}
\end{equation}
where $m\ge n$ and $f$ is constant when $m=n$. Substituting this into
\eqref{relfor} and combining the fact $\Delta_fu=1$, we get
\[
\left(1-\frac 1m\right)|M^n|_f\ge\mathrm{Ric}^m_f(\nabla u, \nabla u)d\mu
+\int_{\Sigma} H_f(u_\nu)^2 d\mu_{\Sigma}.
\]
Since $\mathrm{Ric}^m_f\ge 0$, we further have
\begin{equation}\label{relfor2}
\left(1-\frac 1m\right)|M^n|_f\ge\int_{\Sigma} H_f(u_\nu)^2 d\mu_{\Sigma}.
\end{equation}
On the other hand, by the divergence theorem, we observe
\[
|M^n|_f=\int_M(\Delta_fu)d\mu=-\int_{\Sigma}u_{\nu}\,d\mu_{\Sigma}
\]
and hence we have
\[
|M^n|^2_f=\left(\int_{\Sigma}u_{\nu}\,d\mu_{\Sigma}\right)^2
\le\int_{\Sigma}H_f (u_{\nu})^2 \,d\mu_{\Sigma}
\cdot\int_{\Sigma}H_f^{-1}\,d\mu_{\Sigma},
\]
where we used the Cauchy-Schwarz inequality. Combining this with
\eqref{relfor2} gives \eqref{ros}.

Now we discuss the equality case of \eqref{ros}. In this case, we can show that
$m=n$ and $f$ is constant. Hence the equality case reduces to the equality
case of Theorem 1 in \cite{Ro} and the rigidity statement follows.

To prove $m=n$, we assume by contradiction that $m>n$. The equality of \eqref{ros} implies the
equality of \eqref{Hess} and it gives that
\[
\mathrm{Hess}\, u=\frac{\Delta u}{n}g\quad\text{and}\quad
\Delta_fu=-\frac{m}{m-n}\langle\nabla f,\nabla u\rangle
\]
in $M^n$. Combining the fact $\Delta_fu=1$ from \eqref{Diri}, we
further have that
\[
\mathrm{Hess}\,u=\frac{1}{m}g\quad\text{and}\quad \Delta u+\frac{n}{m-n}\langle\nabla f,\nabla u\rangle=0
\]
in $M^n$. The latter equality is equivalent to
\[
e^{-\frac{nf}{m-n}}\nabla\left(e^{\frac{nf}{m-n}}\nabla u\right)=0
\]
in $M^n$. Multiplying the factor $ue^{\frac{nf}{m-n}}$ in the above equality
and then integrating it over the compact manifold $M^n$ with respect to the
Riemannian measure,
we have that
\begin{align*}
0&=-\int_{M^n}u\nabla\left(e^{\frac{nf}{m-n}}\nabla u\right)dv\\
&=\int_{M^n}e^{\frac{nf}{m-n}}|\nabla u|^2dv
-\int_{\Sigma}u\left\langle e^{\frac{nf}{m-n}}\nabla u,\nu\right\rangle dv\\
&=\int_{M^n}e^{\frac{nf}{m-n}}|\nabla u|^2dv,
\end{align*}
where we used the divergence theorem in the second equality and $u=0$
on $\Sigma$ in the third equality. This gives $|\nabla u|=0$
and hence $u$ is constant in $M^n$. This contradicts with a fact that
$u$ is not constant due to \eqref{Diri}. Therefore we must have $m=n$ and
$f$ is constant.
\end{proof}

The argument of Proposition \ref{inteinequ} can be extended to
the setting of $\mathrm{Ric}_f\ge0$ if we drop the first nonnegative
term of the right hand side of \eqref{relfor}.
\begin{proposition}\label{inGRS}
Let $(M^n,g,e^{-f}dv)$ be an $n$-dimensional compact set with smooth
boundary $\Sigma$ satisfying $\mathrm{Ric}_f\ge0$. If the weighted
mean curvature $H_f$ of $\Sigma$
is positive everywhere, then
\begin{equation}\label{weakHK}
\int_{\Sigma} \frac{1}{H_f}d\mu_{\Sigma}\ge |M^n|_f.
\end{equation}
\end{proposition}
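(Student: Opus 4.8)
The plan is to rerun the Reilly-formula argument from the proof of Proposition~\ref{inteinequ}, now exploiting only the hypothesis $\mathrm{Ric}_f\ge0$ on the $\infty$-Bakry--\'Emery tensor in place of $\mathrm{Ric}^m_f\ge0$. First I would solve the same Dirichlet problem: let $u\in C^\infty(M^n)$ satisfy $\Delta_f u=1$ in $M^n$ with $u|_\Sigma=0$, which exists and is smooth since $M^n$ is compact with smooth boundary. Feeding $u$ into the generalized Reilly formula \eqref{reif} and using $z=u|_\Sigma=0$ to annihilate the two boundary terms carrying $z$ reproduces the identity \eqref{relfor}.

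Next, rather than bounding $|\mathrm{Hess}\,u|^2$ from below via the Cauchy--Schwarz step \eqref{Hess} that manufactured the sharp factor $\tfrac{m}{m-1}$ in Proposition~\ref{inteinequ}, I would simply discard the nonnegative term $\int_M|\mathrm{Hess}\,u|^2d\mu$ on the right-hand side of \eqref{relfor}. Because $\mathrm{Ric}_f\ge0$ forces $\int_M\mathrm{Ric}_f(\nabla u,\nabla u)d\mu\ge0$, while $\Delta_f u=1$ makes the left-hand side equal to $|M^n|_f$, this leaves
\[
|M^n|_f\ge\int_{\Sigma} H_f(u_\nu)^2 d\mu_{\Sigma}.
\]

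The final step reproduces the closing Cauchy--Schwarz argument of Proposition~\ref{inteinequ}. The divergence theorem gives $|M^n|_f=\int_M\Delta_fu\,d\mu=-\int_{\Sigma}u_\nu\,d\mu_{\Sigma}$, and since $H_f>0$ everywhere the Cauchy--Schwarz inequality yields
\[
|M^n|^2_f=\left(\int_{\Sigma}u_\nu\,d\mu_{\Sigma}\right)^2
\le\int_{\Sigma}H_f (u_\nu)^2\,d\mu_{\Sigma}
\cdot\int_{\Sigma}\frac{1}{H_f}\,d\mu_{\Sigma}.
\]
Combining this with the preceding inequality and cancelling one factor of $|M^n|_f$ produces \eqref{weakHK}. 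I do not expect any genuine obstacle: the only substantive departure from Proposition~\ref{inteinequ} is that discarding the entire Hessian term, instead of keeping its trace part through \eqref{Hess}, costs the sharp constant and leaves the weaker bound $|M^n|_f$ rather than $\tfrac{m}{m-1}|M^n|_f$. For the same reason I would not expect a clean equality/rigidity characterization here, which is consistent with the proposition recording no equality case.
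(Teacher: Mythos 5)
Your proposal is correct and is exactly the paper's argument: the paper proves Proposition~\ref{inGRS} by the one-line remark that the proof of Proposition~\ref{inteinequ} goes through after dropping the nonnegative Hessian term on the right-hand side of \eqref{relfor}, which together with $\mathrm{Ric}_f\ge0$ leaves $|M^n|_f\ge\int_{\Sigma}H_f(u_\nu)^2\,d\mu_{\Sigma}$, followed by the same divergence-theorem and Cauchy--Schwarz closing step. Your observation about why the constant degrades from $\tfrac{m}{m-1}$ to $1$ and why no rigidity statement is recorded also matches the paper's own discussion after the proposition.
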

Proposition \ref{inGRS} can be regarded as another Heintze-Karcher type inequality
for weighted manifolds, which is suitable to non-expanding gradient Ricci solitons.
From Theorem 1 in \cite{Ro} (or Proposition \ref{inteinequ}), we know that
\eqref{weakHK} is not sharp when $f$ is constant, but it is possible sharp for a
general function $f$. At present we do not know how to obtain a rigidity statement
from Proposition \ref{inGRS}. We mention that many Heintze-Karcher type inequalities
were investigated in \cite{Br,LX,QX,WX} and references therein.

%55555555555555555555555555555555555555555555555555555555555555555555555555555555555555555555555555555555555555555555555555555555555555555555555555555555555555

\section{Willmore-type inequality for $\mathrm{Ric}_f\ge 0$}\label{willm1}

In this section we study Willmore-type inequalities in weighted manifolds
with $\mathrm{Ric}_f\ge 0$. We shall apply Lemma \ref{lem} to prove Theorem
\ref{Ricf2} by using Wang's argument \cite{Wa}. We first prove the case
(a) of Theorem \ref{Ricf2}.
\begin{proof}[Proof of Theorem \ref{Ricf2}(a)]
We start to prove \eqref{anWill2}. Let $\Omega\subset M^n$ be a bounded
open set with smooth boundary $\Sigma:=\partial\Omega$. We first discuss
the case when $\Omega$ has no hole, that is $M^n\backslash\Omega$ has
no bounded component. From the proof of Proposition \ref{welldef}(a),
we get that
\[
\theta_f(r,p):=\frac{\mathcal{A}_f(r,p)}{e^{ar}\left(
1+\frac{H_f(p)}{n-1}r\right)^{n-1}}
\]
is non-increasing in $r$ on $[0,\tau(p))$. This monotonicity gives that
\[
\mathcal{A}_f(r,p)\le e^{ar-f(p)}\left(1+\frac{H_f(p)}{n-1}r\right)^{n-1}
\]
for all $r<\tau(p)$, where we used $\theta_f(0,p)=e^{-f(p)}$. For any
$R>0$, we apply the upper bound to estimate that
\begin{equation}
\begin{aligned}\label{weighetvol1}
\mathrm{Vol}_f\{x\in M^n|d(x,\Omega)<R\}-|\Omega|_f=&
\int_{\Sigma}\int_0^{\min\{R,\tau(p)\}}\mathcal{A}_f(r,p)drd\sigma(p)\\
\le&\int_{\Sigma}\int_0^{\min\{R,\tau(p)\}}e^{ar-f(p)}
\left(1+\frac{H_f(p)}{n-1}r\right)^{n-1}drd\sigma(p)\\
\le&\int_{\Sigma}\int_0^R e^{ar-f(p)}\left(1+\frac{H_f(p)}{n-1}r\right)^{n-1}drd\sigma(p).
\end{aligned}
\end{equation}
Dividing both sides by $m(R):=n |\mathbb{B}^n|\int^R_0e^{at}t^{n-1}dt$
and letting $R\rightarrow\infty$ yields
\begin{align*}
\mathrm{RV}_f(\Omega)&\le\lim_{R\to\infty}\frac{\int_{\Sigma}\int_0^R e^{ar-f(p)}
\left(1+\frac{H_f(p)}{n-1}r\right)^{n-1}drd\sigma(p)}{m(R)}\\
&=\lim_{R\to\infty}\frac{\int_{\Sigma}e^{aR-f(p)}
\left(1+\frac{H_f(p)}{n-1}R\right)^{n-1}d\sigma(p)}{n|\mathbb{B}^n|e^{aR}R^{n-1}}\\
&=\frac{1}{|\mathbb{S}^{n-1}|
}\int_{\Sigma}\left(\frac{H_f}{n-1}\right)^{n-1}e^{-f}d\sigma,
\end{align*}
where we used the L'Hopital rule in the second equality and
$n|\mathbb{B}^n|=|\mathbb{S}^{n-1}|$ in the third equality.
We hence get \eqref{anWill2} when $\Omega$ has no hole.

If $\Omega$ has holes, let $M^n\backslash\Omega$ be $N_\Omega$ unbounded
connected components $E_i$ ($i=1,2,3,\ldots, N_\Omega$). Set
$D=M^n\backslash(\cup^{N_\Omega}_{i=1}E_i)$. Then $D$ is a bounded open
set with smooth boundary, no holes and $\partial D\subseteq\partial\Omega$.
So for each $p\in \partial D$, we still have $H_f(p)\ge 0$ and $\partial_rf\ge-a$
along all minimal geodesic segments from $p$. Hence, by the preceding discussion,
we have
\[
\int_{\partial D}\left(\frac{H_f}{n-1}\right)^{n-1}e^{-f}d\sigma
\ge\mathrm{RV}_f(D) |\mathbb{S}^{n-1}|.
\]
By the definition of weighted relative volume, we have
$\mathrm{RV}_f(D)\ge\mathrm{RV}_f(\Omega)$ due to the fact $D\supseteq \Omega$
and $\partial D\subseteq\partial\Omega$. We also see that
\[
\int_{\partial \Omega}\left(\frac{H_f}{n-1}\right)^{n-1}e^{-f}d\sigma
\ge\int_{\partial D}\left(\frac{H_f}{n-1}\right)^{n-1}e^{-f}d\sigma.
\]
Putting these results together immediately gives \eqref{anWill2} for a general case.

Next we discuss the rigidity part of Theorem \ref{Ricf2}(a). Suppose
\begin{equation}\label{SWill2}
\mathrm{RV}_f(\Omega)=\frac{1}{|\mathbb{S}^{n-1}|
}\int_{\Sigma}\left(\frac{H_f}{n-1}\right)^{n-1}e^{-f}d\sigma>0.
\end{equation}
Since we assume $H_f=c$ is constant on $\Sigma$, then $H_f=c>0$ and we claim
that $\tau\equiv\infty$ on $\Sigma$. To prove this claim, we assume by
contradiction that there exists a point $p\in\Sigma$ such that $\tau(p)<\infty$.
Then from \eqref{weighetvol1}, we see that
\[
\mathrm{Vol}_f\{x\in M^n|d(x,\Omega)<R\}-|\Omega|_f\le\int_{\Sigma}\int_0^{\tau(p)} e^{ar-f(p)}\left(1+\frac{c r}{n-1}\right)^{n-1}drd\sigma(p)
\]
for $R>\tau(p)$. Dividing both sides by $m(R):=n |\mathbb{B}^n|\int^R_0e^{at}t^{n-1}dt$
and letting $R\rightarrow\infty$ yields
\[
\mathrm{RV}_f(\Omega)\le\lim_{R\to\infty}\frac{\int_{\Sigma}\int_0^{\tau(p)} e^{ar-f(p)}
\left(1+\frac{c r}{n-1}\right)^{n-1}drd\sigma(p)}{m(R)}=0,
\]
where we used $\tau(p)<\infty$, which contradicts \eqref{SWill2}.
Hence the claim follows.

Moreover, from the preceding discussion, we see that
\[
\int_{\Sigma}\left(\frac{H_f}{n-1}\right)^{n-1}e^{-f}d\sigma
\ge\int_{\partial D}\left(\frac{H_f}{n-1}\right)^{n-1}e^{-f}d\sigma
\ge\mathrm{RV}_f(D)|\mathbb{S}^{n-1}|\ge\mathrm{RV}_f(\Omega)|\mathbb{S}^{n-1}|.
\]
Combining this with \eqref{SWill2} implies that $\Sigma=\partial D$
and $\Omega$ has no hole. For any $R^{\prime}<R$, we apply the monotonicity
of $\theta_f(r,p)$ to estimate that
\begin{align*}
\mathrm{Vol}_f\{x\in M^n|d(x,\Omega)<R\}-|\Omega|_f
=&\int_{\Sigma}\int_0^R\mathcal{A}_f(r,p)drd\sigma(p)\\
=&\int_{\Sigma}\int_0^R\theta_f(r,p)e^{ar}\left(1+\frac{H_f(p)}{n-1}r\right)^{n-1}drd\sigma(p)\\
=&\int_{\Sigma}\int_{R^{\prime}}^R\theta_f(r,p)e^{ar}
\left(1+\frac{H_f(p)}{n-1}r\right)^{n-1}drd\sigma(p)\\
&+\int_{\Sigma}\int_0^{R^{\prime}}\theta_f(r,p)e^{ar}
\left(1+\frac{H_f(p)}{n-1}r\right)^{n-1}drd\sigma(p)\\
\le&\int_{\Sigma}\theta_f(R^{\prime},p)\int_{R^{\prime}}^Re^{ar}
\left(1+\frac{H_f(p)}{n-1}r\right)^{n-1}drd\sigma(p)\\
&+\int_{\Sigma}\int_0^{R^{\prime}}\theta_f(r,p)e^{ar}\left(1+\frac{H_f(p)}{n-1}r\right)^{n-1}drd\sigma(p).
\end{align*}
Dividing both sides by $m(R):=n |\mathbb{B}^n|\int^R_0e^{at}t^{n-1}dt$
and letting $R\rightarrow\infty$ yields
\[
\mathrm{RV}_f(\Omega)\le\frac{1}{|\mathbb{S}^{n-1}|}
\int_{\Sigma}\left(\frac{H_f(p)}{n-1}\right)^{n-1}\theta_f(R^{\prime},p)d\sigma(p),
\]
where we used the L'Hopital rule. Then letting $R^{\prime}\to\infty$,
we have
\[
\mathrm{RV}_f(\Omega)\le\frac{1}{|\mathbb{S}^{n-1}|}
\int_{\Sigma}\left(\frac{H_f(p)}{n-1}\right)^{n-1}\theta_f(\infty,p)d\sigma(p),
\]
where $\theta_f(\infty, p)=\lim_{r\rightarrow\infty}\theta_f(r,p)\le e^{-f(p)}$.
Moreover, we have assumed \eqref{SWill2} and hence we must have
$\theta_f(\infty,p)=e^{-f(p)}$ for almost every $p\in\Sigma$. It follows that
\begin{equation}\label{eqcase}
\mathcal{A}_f(r,p)=e^{ar-f(p)}\left(1+\frac{H_f(p)}{n-1}r\right)^{n-1}\quad \text{on}\quad [0,\infty)
\end{equation}
for almost every $p\in\Sigma$. By continuity this identity holds
for all $p\in\Sigma$.

By \eqref{eqcase}, inspecting the comparison argument of Lemma \ref{lem}(a)
and \eqref{compdic}, we get $\partial_rf\equiv 0$ along all minimal geodesic
segments from each point of $\Sigma$. Moreover, on $\Phi(\lbrack0,\infty)\times\Sigma)$,
\[
\mathrm{Hess}\, r=\frac{\Delta r}{n-1}g\quad\text{and}\quad
\Delta_f r=\frac{(n-1)H_f}{n-1+H_fr}.
\]
Since $\partial_rf\equiv 0$, then $\langle\nabla f,\nabla r\rangle=0$
and $H_f=H$. So the above equalities imply
\begin{equation}\label{nar}
\mathrm{Hess}\, r=\frac{\Delta r}{n-1}g=\frac{H}{n-1+Hr}g.
\end{equation}
Hence the second fundamental form $h$ satisfies
\[
h=\frac{H}{n-1}g_{\Sigma},
\]
where $H=c$. Now we claim that $\Sigma$ must be connected. In fact since $\overline{\Omega}$
is a connected compact manifold with $\mathrm{Ric}_f\ge0$ and $H_f\ge0$,
by Proposition \ref{num}, we conclude that either $\Sigma$ is connected or $\Sigma$
has two components. If $\Sigma$ has two components, from Proposition \ref{num} we
see $H\equiv 0$ on each component. Moreover, we have proven $\partial_rf\equiv 0$.
Thus, $H_f\equiv 0$ on $\Sigma$. This is impossible due to our assumption
\eqref{SWill2}. Therefore, $\Sigma$ is connected.

Since $\Phi$ is a diffeomorphism starting from $[0,\infty)\times\Sigma$
onto its image, we have the following form of the pullback metric
$\Phi^{\ast}g: dr^2+\eta_r$, where $\eta_r$ is a $r$-dependent family of
metrics on $\Sigma$ and $\eta_0=g_{\Sigma}$.  In terms of local coordinates
$\{x_1,\cdots,x_{n-1}\}$ on $\Sigma$, \eqref{nar} implies
\[
\frac{\partial}{\partial r}\eta_{ij}=\frac{2H}{n-1+Hr}\eta_{ij}.
\]
This further implies that
\[
\eta_r=\left(1+\frac{H}{n-1}r\right)^2g_{\Sigma}.
\]
Therefore we show that $\Phi(\lbrack0,\infty)\times\Sigma)$ is isometric to
\[
\left([r_0,\infty)\times\Sigma,dr^2+(\tfrac{r}{r_0})^2g_{\Sigma}\right),
\]
where $r_0=\frac{n-1}{H_f}$.

Finally we examine the sufficient part of Theorem \ref{Ricf2}. That is, under
the assumptions of Theorem \ref{Ricf2}, if $(M^n\backslash\Omega,g, e^{-f}dv)$
is isometric to \eqref{anwarprod} and $\partial_rf\equiv 0$ along all minimal
geodesic segments from each point of the connected $\Sigma=\partial\Omega$,
we inspect the equality of \eqref{anWill2}. On the one hand, using
$r_0=\frac{n-1}{H_f}$, we have
\[
\int_{\partial\Omega}\left(\frac{H_f}{n-1}\right)^{n-1}e^{-f}d\sigma
=\frac{1}{{r_0}^{n-1}}|\Sigma|_f.
\]
On the other hand, since $a=0$, we have
\begin{align*}
\mathrm{RV}_f(\Omega) |\mathbb{S}^{n-1}|&=\lim_{R\to\infty}
\frac{\int_{\Sigma}\int^{r_0+R}_{r_0}(\frac{r}{r_0})^{n-1}e^{-f(p)}drd\sigma(p)}{\int^R_0r^{n-1}dr}\\
&=\lim_{R\to\infty}
\frac{\int_{\Sigma}(\frac{r_0+R}{r_0})^{n-1}e^{-f(p)}d\sigma(p)}{R^{n-1}}\\
&=\frac{1}{{r_0}^{n-1}}|\Sigma|_f.
\end{align*}
Hence the equality of \eqref{anWill2} holds.
\end{proof}

In the rest of this section we prove the case (b) of Theorem \ref{Ricf2}.
\begin{proof}[Proof of Theorem \ref{Ricf2}(b)]
The proof is nearly the same as the case (a) of Theorem \ref{Ricf2}.
So we only sketch main steps. Similar to the arguments of Theorem
\ref{Ricf2}(a), we assume, without loss of generality, that $\Omega$ has
no hole. Lemma \ref{lem}(b) shows that
\[
\overline{\theta}_f(r,p):=\frac{\mathcal{A}_f(r,p)}{\left(1+\frac{H_f(p)}{n-1}r\right)^{n-1+4k}}
\]
is non-increasing in $r$ on $[0,\tau(p))$, which implies that
\[
\mathcal{A}_f(r,p)\le e^{-f(p)}\left(1+\frac{H_f(p)}{n-1}r\right)^{n-1+4k}
\]
for $r<\tau(p)$. Hence, for any $R>0$, we have
\begin{align*}
\mathrm{Vol}_f\{x\in M^n|d(x,\Omega)<R\}-|\Omega|_f=&
\int_{\Sigma}\int_0^{\min\{R,\tau(p)\}}\mathcal{A}_f(r,p)drd\sigma(p)\\
\le&\int_{\Sigma}\int_0^{\min\{R,\tau(p)\}}e^{-f(p)}
\left(1+\frac{H_f(p)}{n-1}r\right)^{n-1+4k}drd\sigma(p)\\
\le&\int_{\Sigma}\int_0^R e^{-f(p)}
\left(1+\frac{H_f(p)}{n-1}r\right)^{n-1+4k}drd\sigma(p).
\end{align*}
Dividing both sides by $|\mathbb{B}^{n+4k}|R^{n+4k}$ and letting
$R\rightarrow\infty$ yields
\begin{align*}
\overline{\mathrm{RV}}_f(\Omega)&\le\lim_{R\to\infty}\frac{\int_{\Sigma}\int_0^R e^{-f(p)}
\left(1+\frac{H_f(p)}{n-1}r\right)^{n-1+4k}drd\sigma(p)}{|\mathbb{B}^{n+4k}|r^{n+4k}}\\
&=\lim_{R\to\infty}\frac{\int_{\Sigma}e^{-f(p)}
\left(1+\frac{H_f(p)}{n-1}R\right)^{n-1+4k}d\sigma(p)}{(n+4k)|\mathbb{B}^{n+4k}|R^{n-1+4k}}\\
&=\frac{1}{|\mathbb{S}^{n-1+4k}|
}\int_{\Sigma}\left(\frac{H_f}{n-1}\right)^{n-1+4k}e^{-f}d\sigma,
\end{align*}
where we used the L'Hopital rule in the second equality and
$(n+4k)|\mathbb{B}^{n+4k}|=|\mathbb{S}^{n-1+4k}|$ in the third equality.
This completes the proof of \eqref{anWill2b}.

The rigidity discussion is also similar to the proof of the rigidity part of
Theorem \ref{Ricf2}(a). We easily see that the equality of
\eqref{anWill2b} implies
\begin{equation}\label{eqcaseb}
\mathcal{A}_f(r,p)=e^{-f(p)}\left(1+\frac{H_f(p)}{n-1}r\right)^{n-1+4k}\quad \text{on}\quad [0,\infty)
\end{equation}
for all $p\in\Sigma$. Using this equality and inspecting the comparison argument
of Lemma \ref{lem}(b), we get that $k=0$ and $f\equiv 0$ along all minimal
geodesic segments from $\Sigma$. Thus, we get $H_f=H=c>0$. Moreover, on $\Phi(\lbrack0,\infty)\times\Sigma)$,
\[
\mathrm{Hess}\,r=\frac{\Delta r}{n-1}g\quad\text{and}\quad
\Delta_f r=\frac{(n-1)H_f}{n-1+H_fr}.
\]
The rest of proof is the same as the case of Theorem \ref{Ricf2}(a).
We omit the details.
\end{proof}

%55555555555555555555555555555555555555555555555555555555555555555555555555555555555555555555555555555555555555555555555555555555555555555555555555555555555555

\section{Willmore-type inequality for $\mathrm{Ric}^m_f\ge 0$}\label{Rmfr}

In this section, we prove Theorem \ref{Ricmf} in introduction by adapting the
argument of Wang \cite{Wa}. We now assume $\mathrm{Ric}^m_f\ge 0$ and $f$ is
any smooth function on $M^n$.

\begin{proof}[Proof of Theorem \ref{Ricmf}]
The proof is similar to the argument of Theorem \ref{Ricf2}. We first
prove \eqref{Will} when $\Omega$ has no hole. By Lemma \ref{lemv2}, since $\Delta_fr=\frac{\mathcal{A}_f^{\prime}}{\mathcal{A}_f}$,
we conclude that
\[
\theta_f(r,p):=\frac{\mathcal{A}_f(r,p)}{\left(
1+\frac{H_f(p)}{m-1}r\right)^{m-1}}
\]
is non-increasing in $r$ on $[0,\tau(p))$, where $\tau(p)=\frac{m-1}{H_f^{-}(p)}$.
Since $\theta_f(0,p)=e^{-f(p)}$, we get $\theta_f(r,p)\le e^{-f(p)}$, that is
\[
\mathcal{A}_f(r,p)\le e^{-f(p)}\left(1+\frac{H_f(p)}{m-1}r\right)^{m-1}
\]
for $r\in[0,\tau(p))$. For any $R>0$, we apply this estimate to get that
\begin{equation}
\begin{aligned}\label{weighetvolm1}
\mathrm{Vol}_f\{x\in M^n|d(x,\Omega)<R\}-|\Omega|_f=&
\int_{\Sigma}\int_0^{\min(R,\tau(p))}\mathcal{A}_f(r,p)drd\sigma(p)\\
\le&\int_{\Sigma}\int_0^{\min(R,\tau(p))}
e^{-f(p)}\left(1+\frac{H_f(p)}{m-1}r\right)^{m-1}drd\sigma(p)\\
\le&\int_{\Sigma}\int_0^{\min(R,\tau(p))}e^{-f(p)}
\left(1+\frac{H_f^+(p)}{m-1}r\right)^{m-1}drd\sigma(p)\\
\le&\int_{\Sigma}\int_0^R e^{-f(p)}
\left(1+\frac{H_f^+(p)}{m-1}r\right)^{m-1}drd\sigma(p)\\
=&\frac{R^m}{m}\int_{\Sigma}e^{-f(p)}
\left(\frac{H_f^+(p)}{m-1}\right)^{m-1}d\sigma(p)+O(R^{m-1}),
\end{aligned}
\end{equation}
where $H_f^{+}(p):=\max\{H_f(p),\,0\}$. Dividing both sides by
$|\mathbb{B}^m|R^m=|\mathbb{S}^{m-1}|R^m/m$ and letting
$R\rightarrow\infty$, we get
\[
\mathrm{AVR}^m_f(g)\le\frac{1}{|\mathbb{S}^{m-1}|}\int_{\Sigma}\left(\frac{H_f^+}{m-1}\right)
^{m-1}e^{-f}d\sigma.
\]
Finally, using the fact $|H_f|=H^+_f+H^-_f$ we get \eqref{Will}.

We now show that if $\Omega$ has some holes, \eqref{Will} still holds.
Indeed if $\Omega$ has some holes, let $M^n\backslash\Omega$ be all
unbounded connected components $E_i$ ($i=1,2,3,\ldots$). Then
$D:=M^n\backslash(\cup_{i=1}E_i)$ is a bounded open set with smooth
boundary and no holes, and $\partial D\subseteq\partial\Omega$.
So for each $p\in \partial D$, by the preceding discussion, we have
\[
\int_{\partial D}\left|\frac{H_f}{m-1}\right|^{m-1}e^{-f}d\sigma
\ge\mathrm{AVR}^m_f(g) |\mathbb{S}^{m-1}|.
\]
Combining this with the fact $\partial D\subseteq \partial\Omega$ again, we
get \eqref{Will} for a general case.

Below we discuss the rigidity part of Theorem \ref{Ricmf}. Suppose $H_f=c$
is constant on $\Sigma$ and
\begin{equation}\label{SWill}
\mathrm{AVR}^m_f(g)=\frac{1}{|\mathbb{S}^{m-1}|
}\int_{\Sigma}\left|\frac{H_f}{m-1}\right|^{m-1}e^{-f}d\sigma>0.
\end{equation}
Then $H_f=c>0$ and we claim that $\tau\equiv\infty$ on $\Sigma$.
To prove this claim, we assume by contradiction that there exists
a point $p\in\Sigma$ such that $\tau(p)<\infty$.
From \eqref{weighetvolm1}, we see that
\[
\mathrm{Vol}_f\{x\in M^n|d(x,\Omega)<R\}-|\Omega|_f
\le\int_{\Sigma}\int_0^{\tau(p)}e^{-f(p)}\left(1+\frac{c r}{m-1}\right)^{m-1}drd\sigma(p)
\]
for $R>\tau(p)$. Dividing both sides by $|\mathbb{B}^m|R^m$
and letting $R\rightarrow\infty$ yields
\[
\mathrm{AVR}^m_f(g)\le\lim_{R\to\infty}
\frac{\int_{\Sigma}\int_0^{\tau(p)}e^{-f(p)}\left(1+\frac{c r}{m-1}\right)^{m-1}drd\sigma(p)}{|\mathbb{B}^m|R^m}=0,
\]
where we used $\tau(p)<\infty$, which contradicts \eqref{SWill}.
Hence the claim follows.

Moreover, similar to the argument of Theorem
\ref{Ricf2}(a), $\Omega$ has no hole. For any $R^{\prime}<R$, using the
monotonicity of $\theta_f(r,p)$, we have
\begin{align*}
\mathrm{Vol}_f\{x\in M^n|d(x,\Omega)<R\}-|\Omega|_f=&\int_{\Sigma}\int_0^R
\mathcal{A}_f(r,p)drd\sigma(p)\\
\le&\int_{\Sigma}\int_0^R
\theta_f(r,p)\left(1+\frac{H_f(p)}{m-1}r\right)^{m-1}drd\sigma(p)\\
\le &\int_{\Sigma}\int_{R^{\prime}}^R\theta_f(r,p)
\left(1+\frac{H_f(p)}{m-1}r\right)^{m-1}drd\sigma(p)\\
&+\int_{\Sigma}\int_0^{R^{\prime}}\theta_f(r,p)
\left(1+\frac{H_f(p)}{m-1}r\right)^{m-1}drd\sigma(p)\\
\le&\int_{\Sigma}\theta_f(R^{\prime},p)
\int_{R^{\prime}}^R\left(1+\frac{H_f(p)}{m-1}r\right)^{m-1}drd\sigma(p)\\
&+\int_{\Sigma}\int_0^{R^{\prime}}\theta_f(r,p)\left(1+\frac{H_f(p)}{m-1}r\right)^{m-1}drd\sigma(p).
\end{align*}
Dividing both sides by $|\mathbb{B}^m|R^m=|\mathbb{S}^{m-1}|R^m/m$ and
letting $R\rightarrow\infty$, we obtain
\[
\mathrm{AVR}^m_f(g)\leq\frac{1}{|\mathbb{S}^{m-1}|}
\int_{\Sigma}\left(\frac{H_f(p)}{m-1}\right)^{m-1}\theta_f(R^{\prime},p)d\sigma(p).
\]
Letting $R^{\prime}\to\infty$,
\[
\mathrm{AVR}^m_f(g)\leq\frac{1}{|\mathbb{S}^{m-1}|}
\int_{\Sigma}\left(\frac{H_f(p)}{m-1}\right)^{m-1}\theta_f(\infty,p)d\sigma(p),
\]
where $\theta_f(\infty, p)=\lim_{r\rightarrow\infty}\theta_f(r,p)\le e^{-f(p)}$.
Since we assume \eqref{SWill}, then $\theta_f(\infty,p)=e^{-f(p)}$ for almost
every $p\in\Sigma$. Hence,
\[
\mathcal{A}_f(r,p)=e^{-f(p)}\left(1+\frac{H_f(p)}{m-1}r\right)^{m-1}
\]
on $[0,\infty)$ for almost every $p\in\Sigma$. By continuity, this identity is in
fact true for all $p\in\Sigma$. By this equality, analysing the comparison
argument of Lemma \ref{lemv2}, on $\Phi(\lbrack0,\infty)\times\Sigma)$,
we get that
\[
\Delta_f r=-\frac{m-1}{m-n}\langle\nabla f,\nabla r\rangle
\]
which is equivalent to
\[
\Delta r=-\frac{n-1}{m-n}\langle\nabla f,\nabla r\rangle.
\]
On $\Phi(\lbrack0,\infty)\times\Sigma)$, we also have
\[
\mathrm{Hess}\,r=\frac{\Delta r}{n-1}g\quad\text{and}\quad
\Delta_f r=\frac{(m-1)H_f}{m-1+H_fr}.
\]
Putting these together, we have
\begin{equation}\label{na2r}
\mathrm{Hess}\,r=\frac{H_f}{m-1+H_fr}g.
\end{equation}
Since $H_f=c>0$ on $\Sigma$, this implies
that the second fundamental form $h$ satisfies
\[
h=\frac{H_f}{m-1}g_{\Sigma}.
\]
Since $\Phi$ is a diffeomorphism starting from $[0,\infty)\times\Sigma$
onto its image, we have the following form of the pullback metric
$\Phi^{\ast}g: dr^2+\eta_r$, where $\eta_r$ is a $r$-dependent family of
metrics on $\Sigma$ and $\eta_0=g_{\Sigma}$.  In terms of local coordinates
$\{x_1,\cdots,x_{n-1}\}$ on $\Sigma$, \eqref{na2r} implies
\[
\frac{\partial}{\partial r}\eta_{ij}=\frac{2H_f}{m-1+H_fr}\eta_{ij}.
\]
This further implies that
\[
\eta_r=\left(1+\frac{H_f}{m-1}r\right)^2g_{\Sigma}.
\]
Therefore we show that $\Phi(\lbrack0,\infty)\times\Sigma)$ is isometric to
\[
\left([r_0,\infty)\times\partial\Omega,dr^2+(\tfrac{r}{r_0})^2g_{\partial\Omega}\right)
\]
with $r_0=\frac{m-1}{H_f}$. Since $M^n$ has only one end,
then $\Sigma$ must be connected.
\end{proof}

%55555555555555555555555555555555555555555555555555555555555555555555555555555555555555555555555555555555555555555555555555555555555555555555555555555555555555

\section{Willmore-like inequality for shrinkers}\label{remshri}
In this section, we apply a similar strategy of proving Theorems \ref{Ricf2}
and \ref{Ricmf} to prove Theorem \ref{shri}. Here the computations are much more
involved than before.

\begin{proof}[Proof of Theorem \ref{shri}]
We first give an upper bound of mean curvature by following arguments
of \cite{WW, MuWa}. Similar to Lemma \ref{lem}, for any a fixed point
$p\in\partial\Omega$, let $\gamma_p(t)=\exp_pt\nu(p)$ be the normal
geodesic with initial velocity $\nu(p)$. Using $\mathrm{Ric}_f=\tfrac 12\, g$
in \eqref{ricca}, we have
\[
\frac{\partial}{\partial r}(\Delta r)+\frac{(\Delta r)^2}{n-1}\le f''(r)-\frac 12
\]
for $r\in[0,\tau(p))$, where $f''(r):=\mathrm{Hess} f(\partial r,\partial r)=\frac{d^2}{dr^2}(f\circ \gamma)(r)$. It is inequivalent to
\[
\frac{\tfrac{\partial}{\partial r}[(n-1+H(p)r)^2\Delta r]}{(n-1+H(p)r)^2}
+\frac{1}{n-1}\left[\Delta r-\frac{(n-1)H(p)}{n-1+H(p)r}\right]^2
\le\frac{(n-1)H(p)^2}{(n-1+H(p)r)^2}+f''(r)-\frac 12
\]
for $r\in[0,\tau(p))$. Discarding the above second nonnegative term, we have
\[
\frac{\partial}{\partial r}\left[(n-1+H(p)r)^2\Delta r\right]
\le(n-1)H(p)^2+f''(r)(n-1+H(p)r)^2-\frac 12\left(n-1+H(p)r\right)^2
\]
for $r\in[0,\tau(p))$. Integrating the above inequality from $0$ to $r>0$ and
using the initial condition $\Delta r|_{r=0}=H(p)$, we get that
\begin{align*}
(n-1&+H(p)r)^2\Delta r-(n-1)^2H(p)\\
&\le(n-1)H(p)^2r+\int^r_0(n-1+H(p)s)^2df'(s)-\frac 12\int^r_0(n-1+H(p)s)^2ds\\
&=(n-1)H(p)^2r+(n-1+H(p)r)^2f'(r)-(n-1)^2f'(0)\\
&\quad-2H(p)\int^r_0f'(s)(n-1+H(p)s)ds
-\frac{1}{6H(p)}\Big[(n-1+H(p)r)^3-(n-1)^3\Big]
\end{align*}
for $r\in[0,\tau(p))$. Rearranging some terms of the above inequality, we arrive at
\begin{equation}
\begin{aligned}\label{intesh}
\Delta r&\le\frac{(n-1)H(p)}{n-1+H(p)r}+f'(r)-\frac{n-1+H(p)r}{6H(p)}
+\frac{(n-1)^2}{[n-1+H(p)r]^2}\left[\tfrac{n-1}{6H(p)}-f'(0)\right]\\
&\quad-\frac{2H(p)}{[n-1+H(p)r]^2}\int^r_0f'(s)(n-1+H(p)s)ds
\end{aligned}
\end{equation}
for $r\in[0,\tau(p))$. Integrating the above inequality from $0$ to $r>0$
once again, and using
\[
\Delta r=(\ln\mathcal{A}(r))',
\]
where $\mathcal{A}(r)=\mathcal{A}(p,r)$, we have
\begin{align*}
\ln\mathcal{A}(r)&\le(n-1)\ln\left(1+\tfrac{H(p)}{n-1}r\right)
+[f(r)-f(0)]-\frac{1}{6H(p)}\left[(n-1)r-\tfrac{H(p)}{2}r^2\right]\\
&\quad+\frac{(n-1)^2}{H(p)[n-1+H(p)t]}\left[f'(0)-\tfrac{n-1}{6H(p)}\right]^{t=r}_{t=0}\\
&\quad-\int^r_0\frac{2H(p)}{(n-1+H(p)t)^2}\left(\int^t_0f'(s)[n-1+H(p)s]ds\right)dt
\end{align*}
for $r\in[0,\tau(p))$, where we used the fact $\mathcal{A}(0)=1$. Using
\begin{align*}
-\int^r_0&\frac{2H(p)}{(n-1+H(p)t)^2}\left(\int^t_0f'(s)[n-1+H(p)s]ds\right)dt\\
&=\frac{2}{n-1+H(p)t}\left(\int^t_0f'(s)[n-1+H(p)s]ds\right){\Big|}^{t=r}_{t=0}-2\int^r_0f'(s)ds,
\end{align*}
the above inequality can be written as
\begin{align*}
\ln\mathcal{A}(r)&\le(n-1)\ln\left(1+\tfrac{H(p)}{n-1}r\right)
-[f(r)-f(0)]-\frac{1}{6H(p)}\left[(n-1)r-\tfrac{H(p)}{2}r^2\right]\\
&\quad+\frac{(n-1)^2}{H(p)[n-1+H(p)r]}\left[f'(0)-\tfrac{n-1}{6H(p)}\right]
-\frac{n-1}{H(p)}\left[f'(0)-\tfrac{n-1}{6H(p)}\right]\\
&\quad+\frac{2}{n-1+H(p)r}\int^r_0f'(s)\Big[n-1+H(p)s\Big]ds
\end{align*}
for $r\in[0,\tau(p))$. Since $H(p)>0$, the above inequality furthermore implies that
\begin{align*}
-&\frac{2H(p)}{[n-1+H(p)r]^2}\int^r_0f'(s)\Big[n-1+H(p)s\Big]ds\\
&\le-\frac{H(p)\ln\mathcal{A}(r)}{n-1+H(p)r}+\frac{(n-1)H(p)}{n-1+H(p)r}\ln\left(1+\tfrac{H(p)}{n-1}r\right)
-\frac{H(p)[f(r)-f(0)]}{n-1+H(p)r}\\
&\quad-\frac{1}{6[n-1+H(p)r]}\left[(n-1)r-\tfrac{H(p)}{2}r^2\right]
+\frac{(n-1)^2}{[n-1+H(p)r]^2}\left[f'(0)-\tfrac{n-1}{6H(p)}\right]\\
&\quad-\frac{n-1}{n-1+H(p)r}\left[f'(0)-\tfrac{n-1}{6H(p)}\right]\\
&=-\frac{H(p)}{n-1+H(p)r}\ln\tfrac{\mathcal{A}(r)}{\big(1+\tfrac{H(p)}{n-1}r\big)^{n-1}}
-\frac{H(p)f(r)}{n-1+H(p)r}\\
&\quad-\frac{1}{6[n-1+H(p)r]}\left[(n-1)r-\tfrac{H(p)}{2}r^2\right]
+\frac{(n-1)^2}{[n-1+H(p)r]^2}\left[f'(0)-\tfrac{n-1}{6H(p)}\right]\\
&\quad+\frac{1}{n-1+H(p)r}\left[H(p)f(0)-(n-1)f'(0)+\tfrac{(n-1)^2}{6H(p)}\right]
\end{align*}
for $r\in[0,\tau(p))$. Substituting this into \eqref{intesh} we have
\begin{equation}
\begin{aligned}\label{intesh2}
\Delta r&\le\frac{(n-1)H(p)}{n-1+H(p)r}
-\frac{H(p)}{n-1+H(p)r}\ln\tfrac{\mathcal{A}(r)}{\big(1+\tfrac{H(p)}{n-1}r\big)^{n-1}}
+f'(r)-\frac{H(p)f(r)}{n-1+H(p)r}\\
&\quad-\frac{n-1+H(p)r}{6H(p)}
-\frac{1}{6[n-1+H(p)r]}\left[(n-1)r-\tfrac{H(p)}{2}r^2\right]\\
&\quad+\frac{1}{n-1+H(p)r}\left[H(p)f(0)-(n-1)f'(0)+\tfrac{(n-1)^2}{6H(p)}\right]
\end{aligned}
\end{equation}
for $r\in[0,\tau(p))$.

Next, we apply the above estimate to derive an upper bound of $\mathcal{A}(r)$.
On shrinkers, we have $\mathrm{R}+|\nabla f|^2=f$ and $\mathrm{R}\ge0$ (see
\cite{Cbl}), then $|\nabla f|^2\le f$. Using this, we observe that
\begin{equation}\label{fmo}
f'(r)\le\frac{H(p)|\nabla f|^2}{n-1+H(p)r}
+\frac{n-1+H(p)r}{4H(p)}\le\frac{H(p)f(r)}{n-1+H(p)r}
+\frac{n-1+H(p)r}{4H(p)}.
\end{equation}
where we used the Cauchy-Schwarz inequality. Putting this into \eqref{intesh2} yields
\begin{equation}
\begin{aligned}\label{intesh3}
\Delta r&\le\frac{(n-1)H(p)}{n-1+H(p)r}
-\frac{H(p)}{n-1+H(p)r}\ln\tfrac{\mathcal{A}(r)}{\big(1+\tfrac{H(p)}{n-1}r\big)^{n-1}}\\
&\quad+\frac{n-1+H(p)r}{12H(p)}
-\frac{1}{6[n-1+H(p)r]}\left[(n-1)r-\tfrac{H(p)}{2}r^2\right]\\
&\quad+\frac{1}{n-1+H(p)r}\left[H(p)f(0)-(n-1)f'(0)+\tfrac{(n-1)^2}{6H(p)}\right]\\
&=\frac{(n-1)H(p)}{n-1+H(p)r}
-\frac{H(p)}{n-1+H(p)r}\ln\tfrac{\mathcal{A}(r)}{\big(1+\tfrac{H(p)}{n-1}r\big)^{n-1}}\\
&\quad+\frac{1}{n-1+H(p)r}\left[H(p)f(0)-(n-1)f'(0)+\tfrac{(n-1)^2}{4H(p)}\right]
\end{aligned}
\end{equation}
for $r\in[0,\tau(p))$. If we let
\[
\theta(r,p):=\frac{\mathcal{A}(r)}{\big(1+\tfrac{H(p)}{n-1}r\big)^{n-1}},
\]
then \eqref{intesh3} can be written as
\begin{equation}\label{monton}
\left\{\Big[n-1+H(p)r\Big]\ln\theta(r,p)\right\}'\le c
\end{equation}
for $r\in[0,\tau(p))$, where $c:=H(p)f(p)-(n-1)f'(p)+\tfrac{(n-1)^2}{4H(p)}$.
We remark that $c\ge0$. Indeed, since $|\nabla f|^2\le f$, then
\[
c\ge H(p)f(p)-(n-1)\sqrt{f(p)}+\frac{(n-1)^2}{4H(p)}
=\left(\sqrt{H(p)f(p)}-\tfrac{n-1}{2\sqrt{H(p)}}\right)^2\ge0.
\]
Integrating \eqref{monton} from $0$ to $r>0$ gives
\begin{equation}\label{voeup}
\mathcal{A}(r,p)\le e^{\frac{c r}{n-1+H(p)r}}\left(1+\frac{H(p)}{n-1}r\right)^{n-1}
\end{equation}
for $r\in[0,\tau(p))$.

Finally, we apply \eqref{voeup} to prove our result. Without loss of generality,
we assume that $\Omega$ has no hole. For any $R>0$, we see that
\begin{equation}
\begin{aligned}\label{shrinkervol}
\mathrm{Vol}\{x\in M^n|d(x,\Omega)<R\}-|\Omega|=&
\int_{\Sigma}\int_0^{\min(R,\tau(p))}\mathcal{A}(r,p)drd\sigma(p)\\
\le&\int_{\Sigma}\int_0^{\min(R,\tau(p))}e^{\frac{c r}{n-1+H(p)r}}\left(1+\frac{H(p)}{n-1}r\right)^{n-1}drd\sigma(p)\\
\le&\int_{\Sigma}\int_0^Re^{\frac{c r}{n-1+H(p)r}}\left(1+\frac{H(p)}{n-1}r\right)^{n-1}drd\sigma(p).
\end{aligned}
\end{equation}
Dividing both sides by $|\mathbb{B}^n|R^n=|\mathbb{S}^{n-1}|R^n/n$,
letting $R\rightarrow\infty$, and using the L'Hopital rule,
we get
\begin{equation*}
\begin{aligned}
\mathrm{AVR}(g)|\mathbb{S}^{n-1}|\le&\frac{\int_{\Sigma}e^{\frac{c R}{n-1+H(p)R}}\left(1+\frac{H(p)}{n-1}R\right)^{n-1}drd\sigma(p)}{R^{n-1}}\\
\le&\int_{\Sigma}e^{\frac{c}{H(p)}}\left(\frac{H(p)}{n-1}\right)^{n-1}d\sigma(p),
\end{aligned}
\end{equation*}
where we used $c\ge0$. This completes the proof of the inequality.

Below we discuss the equality case. On the one hand, an open $n$-ball
$\Omega=\mathbb{B}^n(r_0)$ with radius $r_0>0$ in the Gaussian
shrinker $(\mathbb{R}^n, \delta_{ij},e^{-|x|^2/4}dv)$ attains the
equality of \eqref{shwill}. On the other hand, we will show that
such case is a unique example when the equality of \eqref{shwill} occurs.
Suppose
\begin{equation}\label{equshr}
\mathrm{AVR}(g)|\mathbb{S}^{n-1}|=\int_{\Sigma}e^{\frac{c}{H}}\left(\frac{H}{n-1}\right)^{n-1}d\sigma>0.
\end{equation}
By the same argument of Theorem \ref{Ricf2}(a), we know that $\Omega$ has no hole.
Moreover, we claim that $\tau\equiv\infty$ on $\Sigma$. To prove this claim,
we assume by contradiction that there exists a point $p\in\Sigma$ such that
$\tau(p)<\infty$. From \eqref{shrinkervol}, we get that
\[
\mathrm{Vol}\{x\in M^n|d(x,\Omega)<R\}-|\Omega|
\le\int_{\Sigma}\int_0^{\tau(p)}e^{\frac{c r}{n-1+H(p)r}}
\left(1+\frac{H(p)}{n-1}r\right)^{n-1}drd\sigma(p)
\]
for $R>\tau(p)$. Dividing both sides by $|\mathbb{B}^n|R^n$
and letting $R\rightarrow\infty$ yields
\[
\mathrm{AVR}(g)\le\lim_{R\to\infty}
\frac{\int_{\Sigma}\int_0^{\tau(p)}e^{\frac{c r}{n-1+H(p)r}}
\left(1+\frac{H(p)}{n-1}r\right)^{n-1}drd\sigma(p)}{|\mathbb{B}^n|R^n}=0,
\]
where we used $\tau(p)<\infty$, which contradicts \eqref{equshr}. Hence the claim follows.

For any a fixed point $p\in\Sigma$, let
\[
K(r,p):=(n-1+H(p)r)\ln \theta(r,p)-cr.
\]
By \eqref{monton}, $K(r,p)$ is non-increasing in $r$ on $[0,\infty)$.
Since $K(0,p)=0$, then $K(r,p)\le0$ on $[0,\infty)$ for each
$p\in\Sigma$. Below we \emph{assert} that
\[
K(r,p)\equiv0
\]
on $[0,\infty)$ for each $p\in\Sigma$. We now prove this assertion.
For any $R^{\prime}<R$, using the definition of $K(r,p)$, we have
\begin{align*}
\mathrm{Vol}\{x\in M^n|d(x,\Omega)<R\}-|\Omega|=&\int_{\Sigma}\int_0^R
\mathcal{A}(r,p)drd\sigma(p)\\
\le&\int_{\Sigma}\int_0^R
e^{\frac{K(r,p)+cr}{n-1+H(p)r}}\left(1+\frac{H(p)}{n-1}r\right)^{n-1}drd\sigma(p)\\
\le &\int_{\Sigma}\int_{R^{\prime}}^Re^{\frac{K(r,p)+cr}{n-1+H(p)r}}
\left(1+\frac{H(p)}{n-1}r\right)^{n-1}drd\sigma(p)\\
&+\int_{\Sigma}\int_0^{R^{\prime}}e^{\frac{K(r,p)+cr}{n-1+H(p)r}}
\left(1+\frac{H(p)}{n-1}r\right)^{n-1}drd\sigma(p)\\
=&\int_{\Sigma}e^{\frac{K(\bar{r},p)}{n-1+H(p)\bar{r}}}
\int_{R^{\prime}}^Re^{\frac{cr}{n-1+H(p)r}}\left(1+\frac{H(p)}{n-1}r\right)^{n-1}drd\sigma(p)\\
&+\int_{\Sigma}\int_0^{R^{\prime}}e^{\frac{K(r,p)+cr}{n-1+H(p)r}}\left(1+\frac{H(p)}{n-1}r\right)^{n-1}drd\sigma(p)
\end{align*}
for some $\bar{r}\in[R', R]$, where we used the mean value theorem in the last
equality. Dividing both sides by $|\mathbb{B}^n|R^n=|\mathbb{S}^{n-1}|R^n/n$ and
letting $R\rightarrow\infty$, we obtain
\[
\mathrm{AVR}(g)\leq\frac{1}{|\mathbb{S}^{n-1}|}
\int_{\Sigma}e^{\frac{c}{H(p)}}\left(\frac{H(p)}{n-1}\right)^{n-1}
e^{\frac{K(\bar{r},p)}{n-1+H(p)\bar{r}}}d\sigma(p),
\]
where we used $c\ge0$. Combining this with \eqref{equshr} yields
\[
\int_{\Sigma}e^{\frac{c}{H(p)}}\left(\frac{H(p)}{n-1}\right)^{n-1}
\left(e^{\frac{K(\bar{r},p)}{n-1+H(p)\bar{r}}}-1\right)d\sigma(p)\ge0.
\]
Combining this with the fact $K(r,p)\le0$ on $[0,\infty)$ for each $p\in\Sigma$, we
conclude that
\[
K(\bar{r},p)=0
\]
for each $p\in\Sigma$, where $\bar{r}\in[R', \infty)$. Since $R'$ can be
chosen to be arbitrarily large, the initial value $K(0,p)=0$ and the
non-increasing property of $K(r,p)$ in $r$, we must have $K(r,p)\equiv0$
on $[0,\infty)$ for any $p\in\Sigma$ and the assertion follows. This
assertion gives that
\[
\mathcal{A}(r,p)\equiv e^{\frac{c r}{n-1+H(p)r}}\left(1+\frac{H(p)}{n-1}r\right)^{n-1}
\]
for all $r\ge 0$ and each $p\in\Sigma$. By this equality, analysing the above comparison
argument, from \eqref{fmo}, we conclude that $|\nabla f|^2\equiv f$ on $(M^n,g,e^{-f}dv)$.
Thus the scalar curvature $\mathrm{R}\equiv 0$, and hence $(M^n,g,e^{-f}dv)$ is isometric
to the Gaussian shrinker $(\mathbb{R}^n, \delta_{ij},e^{-|x|^2/4}dv)$ due to \cite{PRS}.
Inspecting the above comparison argument, on $\Phi(\lbrack0,\infty)\times\Sigma)$, we
also have
\[
\mathrm{Hess}\,r=\frac{\Delta r}{n-1}g=\frac{H}{n-1+Hr}g\quad\text{and}\quad
\mathrm{Ric}\equiv 0.
\]
Following Wang's argument \cite{Wa}, the above first equation implies that
the second fundamental form $h$ satisfies
\[
h=\frac{H}{n-1}g_{\Sigma}.
\]
Let $\{e_0=\nu,e_1,...,e_{n-1}\}$ be orthnormal frame long $\Sigma$.
By the codazzi equation, we have
\[
R(e_k,e_j,e_i,\nu)=h_{ij,k}-h_{ik,j}=\frac{1}{n-1}(H_k\delta_{ij}-H_j\delta_{ik})
\]
for $1\le i,j,k\le n-1$. Tracing over $i$ and $k$ gives
\[
-\frac{n-2}{n-1}H_j=\mathrm{Ric}(e_j,\nu)=0.
\]
So $H$ is locally constant on $\Sigma$. Since $M^n=\mathbb{R}^n$ has only one end,
$\Sigma$ is connected. Hence $H$ is global constant on $\Sigma$. Thus,
$\Sigma$ is a closed embedded constant positive mean curvature hypersurface in
$\mathbb{R}^n$. By Alexandrov's theorem \cite{Al}, $\Sigma$ is a sphere.
\end{proof}

%55555555555555555555555555555555555555555555555555555555555555555555555555555555555555555555555555555555555555555555555555555555555555555555555555555555555555

\section{Isoperimetric type inequalities}\label{isoin}

In this section, we apply Theorems \ref{Ricf2} and \ref{Ricmf} to study
some weighted isoperimetric inequalities for a compact domain of a
weighted manifold provided it is a critical point of weighted isoperimetric
functional. First, we prove a sharp isoperimetric type inequality
in weighted manifolds with $\mathrm{Ric}_f\ge 0$.
\begin{theorem}\label{iso1}
Let $(M^n,g,e^{-f}dv)$ be a complete noncompact weighted $n$-manifold with
$\mathrm{Ric}_f\ge 0$, and let $\Omega\subset M^n$ be a bounded open set with
smooth boundary $\partial\Omega$. Let the weighted mean curvature $H_f$ of
$\partial\Omega$ be nonnegative everywhere. Assume that $\overline{\Omega}$
(the compact set of $\Omega$) is a critical point of the weighted isoperimetric
functional
\[
G\to\frac{|\partial G|_f^n}{|G|_f^{n-1}},
\]
where $G$ is a compact domain with smooth boundary $\partial G$.

(a) If \eqref{assump2} holds, then
\begin{equation}\label{isoineq1}
|\partial\Omega|_f\ge n|\mathbb{B}^n|^{\frac 1n}
\mathrm{RV}^{\frac 1n}_f(\Omega) |\Omega|_f^\frac{n-1}{n}.
\end{equation}
Moreover, if $\Omega$ is connected,, the equality of \eqref{isoineq1} holds if and only if $\partial\Omega$
is connected and $(M^n\backslash\Omega,g, e^{-f}dv)$ is isometric to
\[
\left([r_0,\infty)\times\partial\Omega,dr^2+(\tfrac{r}{r_0})^2g_{\partial\Omega}\right)
\]
with $\partial_rf\equiv 0$ along all minimal geodesic segments from
$\partial\Omega$ ($a=0$), where $r_0=\left(\frac{|\partial\Omega|_f}{\mathrm{RV}_f(\Omega)|\mathbb{S}^{n-1}|}\right)^{\frac{1}{n-1}}$.

(b) If \eqref{assump2b} holds, then
\begin{equation}\label{isoineq1b}
|\partial\Omega|_f\ge n\left(1+\tfrac{4k}{n}\right)^{\frac{1}{n+4k}}
\cdot|\mathbb{B}^{n+4k}|^{\frac{1}{n+4k}}
\cdot\overline{\mathrm{RV}}^{\frac{1}{n+4k}}_f(\Omega)
\cdot|\Omega|_f^\frac{n-1+4k}{n+4k}.
\end{equation}
Moreover, if $\Omega$ is connected, the equality of \eqref{isoineq1b} holds
if and only if $\partial\Omega$ is connected and $(M^n\backslash\Omega,g, e^{-f}dv)$
is isometric to
\[
\left([r_1,\infty)\times\partial\Omega,dr^2+(\tfrac{r}{r_1})^2g_{\partial\Omega}\right)
\]
with $ f\equiv 0$ along all minimal geodesic segments from
$\partial\Omega$ ($k=0$), where $r_1=\left(\frac{|\partial\Omega|_f}{\overline{\mathrm{RV}}_f(\Omega)|\mathbb{S}^{n-1}|}\right)^{\frac{1}{n-1}}$.
\end{theorem}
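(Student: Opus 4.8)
The plan is to use the criticality of $\overline{\Omega}$ to extract the one geometric fact we really need---that the weighted mean curvature $H_f$ is constant on $\partial\Omega$---and then feed this into the Willmore-type inequalities of Theorem \ref{Ricf2}, after which only elementary algebra and an appeal to the rigidity in Theorem \ref{Ricf2} remain.

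First I would record the first variations of the weighted isoperimetric functional. Under a normal variation of $\partial\Omega$ with speed $\phi$, the weighted perimeter and weighted volume satisfy
\[
\frac{d}{dt}\Big|_{t=0}|\partial\Omega_t|_f=\int_{\partial\Omega}H_f\,\phi\,e^{-f}d\sigma,
\qquad
\frac{d}{dt}\Big|_{t=0}|\Omega_t|_f=\int_{\partial\Omega}\phi\,e^{-f}d\sigma,
\]
the first identity being precisely the sense in which $H_f$ is the weighted mean curvature. Differentiating $\log\big(|\partial G|_f^{\,n}/|G|_f^{\,n-1}\big)$ and imposing that $\overline{\Omega}$ is critical for every $\phi$ yields the Euler--Lagrange equation
\[
\frac{n\,H_f}{|\partial\Omega|_f}=\frac{n-1}{|\Omega|_f},
\qquad\text{i.e.}\qquad
H_f\equiv\frac{(n-1)\,|\partial\Omega|_f}{n\,|\Omega|_f},
\]
so $H_f$ is a positive constant, consistent with the standing hypothesis $H_f\ge0$.

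With $H_f$ constant, the integral on the left of \eqref{anWill2} is trivial. For part (a), substituting $\tfrac{H_f}{n-1}=\tfrac{|\partial\Omega|_f}{n|\Omega|_f}$ into Theorem \ref{Ricf2}(a) gives
\[
\Big(\frac{|\partial\Omega|_f}{n|\Omega|_f}\Big)^{n-1}|\partial\Omega|_f
\ge\mathrm{RV}_f(\Omega)\,|\mathbb{S}^{n-1}|,
\]
and using $|\mathbb{S}^{n-1}|=n|\mathbb{B}^n|$ and taking $n$-th roots produces \eqref{isoineq1}. For part (b) the Euler--Lagrange equation is identical, since the functional is the same, so I would instead insert the same constant into the exponent $n-1+4k$ of \eqref{anWill2b}, obtaining
\[
\Big(\frac{|\partial\Omega|_f}{n|\Omega|_f}\Big)^{n-1+4k}|\partial\Omega|_f
\ge\overline{\mathrm{RV}}_f(\Omega)\,|\mathbb{S}^{n-1+4k}|.
\]
Then $|\mathbb{S}^{n-1+4k}|=(n+4k)|\mathbb{B}^{n+4k}|$, taking $(n+4k)$-th roots, and collecting the powers of $n$ via $n^{\frac{n-1+4k}{n+4k}}(n+4k)^{\frac{1}{n+4k}}=n\big(1+\tfrac{4k}{n}\big)^{\frac{1}{n+4k}}$, yields \eqref{isoineq1b}.

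For the equality cases, I would note that the only inequality used above is the Willmore inequality of Theorem \ref{Ricf2}, so equality in \eqref{isoineq1} (respectively \eqref{isoineq1b}) is equivalent to equality in \eqref{anWill2} (respectively \eqref{anWill2b}). The identity forced in the equality case shows $\mathrm{RV}_f(\Omega)>0$ (respectively $\overline{\mathrm{RV}}_f(\Omega)>0$) automatically, since $|\partial\Omega|_f>0$; together with the connectedness of $\Omega$ and the already-established constancy of $H_f$, the rigidity statements of Theorem \ref{Ricf2} apply verbatim and give the claimed conical splitting with $\partial_rf\equiv0$ (respectively $f\equiv0$). I expect the only genuinely substantive step to be the first-variation computation producing the constant-$H_f$ Euler--Lagrange equation; once that is in hand, the remainder is algebraic bookkeeping and a direct citation of Theorem \ref{Ricf2}.
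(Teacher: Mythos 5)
Your proposal is correct and follows essentially the same route as the paper: derive the Euler--Lagrange equation $H_f/(n-1)=\tfrac{1}{n}|\partial\Omega|_f/|\Omega|_f$ from the first variation (the paper follows Ros's argument), substitute this constant value into Theorem \ref{Ricf2}(a) (resp.\ (b)), simplify using $|\mathbb{S}^{n-1}|=n|\mathbb{B}^n|$ (resp.\ $|\mathbb{S}^{n-1+4k}|=(n+4k)|\mathbb{B}^{n+4k}|$), and invoke the rigidity of Theorem \ref{Ricf2} for the equality case, noting that equality forces $\mathrm{RV}_f(\Omega)>0$ and $H_f$ a positive constant. The only difference is cosmetic: the paper writes out case (a) and declares case (b) analogous, whereas you carry out the exponent bookkeeping for (b) explicitly, and your algebra there is correct.
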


\begin{proof}[Proof of Theorem \ref{iso1}]
We only prove the case (a) because the case (b) can be similarly proved.
Assume that $\overline{\Omega}$ is a critical point of the weighted
isoperimetric functional
\[
G\to\frac{|\partial G|_f^n}{|G|_f^{n-1}}.
\]
Following the argument of Ros \cite{Ro}, let $\xi$ be any smooth function
on $\partial\Omega$ and consider the normal variation of $\partial\Omega$,
saying that $\psi(\cdot,t):\partial\Omega\to M^n$ defined by
\[
\psi(x,t)=\exp_x(-t\xi(x)\nu(x)), \quad x\in\partial\Omega,\,\, t>0,
\]
where $\exp$ is the exponential map of $M^n$ and $\nu:\partial\Omega\to TM^n$
is the outer unit and normal to $\partial\Omega$ smooth vector files along
the boundary. Clearly,  $\psi$ determines a variation $\overline{\Omega}_t$
of $\overline{\Omega}$, $|t|<\epsilon$. We set
\[
A_f(t)=|\partial\overline{\Omega}_t|_f
\quad \text{and}\quad
V_f(t)=|\overline{\Omega}_t|_f.
\]
Then by the first variational formulas, we have
\[
A_f'(0)=\int_{\partial\Omega}\xi H_f\,d\mu_{\partial\Omega} \quad \text{and} \quad
V_f'(0)=\int_{\partial\Omega}\xi\, d\mu_{\partial\Omega}.
\]
By our theorem assumption, we have
\[
\frac{d}{dt}\bigg|_{t=0}\frac{A_f(t)^n}{V_f(t)^{n-1}}=0,
\]
that is,
\[
\left(\frac{|\partial\Omega|_f}{|\Omega|_f}\right)^{n-1}
\int_{\partial\Omega}\xi\left[nH_f-(n-1)\frac{|\partial\Omega|_f}{|\Omega|_f}\right]d\mu_{\partial\Omega}=0
\]
for any given $\xi\in C^\infty(\partial\Omega)$. Hence,
we have
\begin{equation}\label{mecur1}
\frac{H_f}{n-1}=\frac{1}{n}\frac{|\partial\Omega|_f}{|\Omega|_f}.
\end{equation}
Substituting this into Theorem \ref{Ricf2}(a) yields
\begin{equation}\label{mecequ}
\left(\frac 1n\frac{|\partial\Omega|_f}{|\Omega|_f}\right)^{n-1}|\partial\Omega|_f
\ge\mathrm{RV}_f(\Omega)|\mathbb{S}^{n-1}|,
\end{equation}
which implies \eqref{isoineq1} by using $|\mathbb{S}^{n-1}|=n|\mathbb{B}^n|$.

The equality case of \eqref{isoineq1} and \eqref{mecur1} imply $\mathrm{RV}_f(\Omega)>0$,
$H_f>0$ is constant and the equality of \eqref{anWill2}. Since $\Omega$ is
connected, by Theorem \ref{Ricf2}(a), we get that $\partial\Omega$ is connected,
$\partial_rf\equiv 0$ and $(M^n\backslash\Omega,g)$ is isometric to the warped
product manifold given by \eqref{anwarprod}.
\end{proof}

In the end of this section, we give another sharp weighted isoperimetric type
inequality in weighted manifolds with $\mathrm{Ric}^m_f\ge 0$.
\begin{theorem}\label{iso2}
Let $(M^n,g,e^{-f}dv)$ be a complete noncompact weighted $n$-manifold with
$\mathrm{Ric}^m_f\ge 0$, and $\Omega\subset M^n$ a bounded open set with
smooth connected boundary $\partial\Omega$. If the compact set
$\overline{\Omega}$ is a critical point of the weighted isoperimetric
functional
\[
G\to\frac{|\partial G|_f^m}{|G|_f^{m-1}},
\]
where $G$ is a compact domain with smooth boundary $\partial G$,
then
\begin{equation}\label{isoineq2}
|\partial\Omega|_f\ge m|\mathbb{B}^m|^{\frac 1m}
(\mathrm{AVR}^m_f(g))^{\frac 1m} |\Omega|_f^\frac{m-1}{m}.
\end{equation}
Moreover, if $M^n$ has only one end, then the equality of \eqref{isoineq2}
implies that $\Omega$ is isometric to the Euclidean ball $\mathbb{B}^n(r_0)$
for some $r_0>0$ with $f$ being constant in $\Omega$ and
$(M^n\backslash\Omega,g, e^{-f}dv)$ is isometric to
\[
\left([r_0,\infty)\times\partial\Omega,dr^2+(\tfrac{r}{r_0})^2g_{\partial\Omega}\right).
\]
\end{theorem}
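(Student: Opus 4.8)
The plan is to follow the argument of Theorem \ref{iso1}, but with the exponent $m$ in place of $n$ and with Theorem \ref{Ricmf} used in place of Theorem \ref{Ricf2}(a), and then to extract the interior rigidity of $\Omega$ from the weighted Heintze-Karcher inequality of Proposition \ref{inteinequ}. First I would run Ros's first-variation computation \cite{Ro} exactly as in the proof of Theorem \ref{iso1}: taking the normal variation $\psi(x,t)=\exp_x(-t\xi(x)\nu(x))$ and setting $A_f(t)=|\partial\overline{\Omega}_t|_f$ and $V_f(t)=|\overline{\Omega}_t|_f$, the first variational formulas give $A_f'(0)=\int_{\partial\Omega}\xi H_f\,d\mu_{\partial\Omega}$ and $V_f'(0)=\int_{\partial\Omega}\xi\,d\mu_{\partial\Omega}$. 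The criticality of $G\mapsto|\partial G|_f^m/|G|_f^{m-1}$ at $\overline{\Omega}$ reads $m\,A_f'(0)V_f(0)=(m-1)A_f(0)V_f'(0)$, and since $\xi$ is arbitrary this forces
\[
\frac{H_f}{m-1}=\frac{1}{m}\frac{|\partial\Omega|_f}{|\Omega|_f},
\]
so that $H_f$ is a positive constant on the connected boundary $\partial\Omega$.

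Next I would substitute this constant value into the Willmore inequality \eqref{Will} of Theorem \ref{Ricmf}. Since $H_f$ is constant, the left-hand side of \eqref{Will} equals $\left(H_f/(m-1)\right)^{m-1}|\partial\Omega|_f$; inserting the displayed identity and using $|\mathbb{S}^{m-1}|=m|\mathbb{B}^m|$, a routine rearrangement followed by taking the $m$-th root produces exactly \eqref{isoineq2}. In particular, equality in \eqref{isoineq2} is equivalent to equality in \eqref{Will}, and because $|\partial\Omega|_f>0$ it also forces $\mathrm{AVR}^m_f(g)>0$.

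For the rigidity I would treat the exterior and the interior separately. For the exterior, equality in \eqref{Will} together with the connectedness of $\partial\Omega$, the constancy of $H_f$, the condition $\mathrm{AVR}^m_f(g)>0$, and the one-end hypothesis lets me invoke the rigidity part of Theorem \ref{Ricmf}, giving that $(M^n\backslash\Omega,g,e^{-f}dv)$ is isometric to the stated warped product. For the interior, the key observation is that the critical value of $H_f$ already saturates the Heintze-Karcher inequality of Proposition \ref{inteinequ}: with $H_f$ constant one computes $\int_{\partial\Omega}H_f^{-1}\,d\mu_{\partial\Omega}=|\partial\Omega|_f/H_f=\tfrac{m}{m-1}|\Omega|_f$, which is precisely equality in \eqref{ros}. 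The rigidity clause of Proposition \ref{inteinequ} then forces $m=n$, $f$ to be constant on $\overline{\Omega}$, and $\overline{\Omega}$ to be isometric to a Euclidean ball $\mathbb{B}^n(r_0)$.

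The step I expect to be the main obstacle is this interior rigidity, since it rests on recognizing that criticality by itself saturates the Heintze-Karcher inequality, so that the ball structure of $\Omega$ is read off from Proposition \ref{inteinequ} rather than from the Willmore equality; this is exactly why the sharp Heintze-Karcher inequality was recorded above with its equality characterization. A secondary point to verify is the compatibility of the interior conclusion $m=n$ (with $f$ constant on $\overline{\Omega}$) with the exponent $m$ entering the exterior normalization $r_0=\left(|\partial\Omega|_f/(\mathrm{AVR}^m_f(g)|\mathbb{S}^{m-1}|)\right)^{1/(m-1)}$, so that the interior ball and the exterior warped product glue consistently along $\partial\Omega$.
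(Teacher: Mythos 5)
Your proposal is correct and follows essentially the same route as the paper's proof: Ros's first-variation argument yields the constant value $\tfrac{H_f}{m-1}=\tfrac{1}{m}\tfrac{|\partial\Omega|_f}{|\Omega|_f}$, substitution into Theorem \ref{Ricmf} gives \eqref{isoineq2}, the exterior rigidity comes from the equality case of \eqref{Will} via Theorem \ref{Ricmf}, and the interior rigidity comes from noting that this constant $H_f$ saturates the Heintze--Karcher inequality \eqref{ros} so that the rigidity clause of Proposition \ref{inteinequ} forces $m=n$, $f$ constant, and $\overline{\Omega}$ a Euclidean ball. Your explicit computation $\int_{\partial\Omega}H_f^{-1}\,d\mu_{\partial\Omega}=\tfrac{m}{m-1}|\Omega|_f$ is exactly the (implicit) step the paper relies on, and is also the content of its Corollary \ref{corol}.
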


\begin{proof}[Proof of Theorem \ref{iso2}]
Assume that $\overline{\Omega}$ is a critical point of the weighted
isoperimetric functional
\[
G\to\frac{|\partial G|_f^m}{|G|_f^{m-1}}.
\]
Following the same argument of Theorem \ref{iso1}, by our theorem
assumption, we finally get
\[
\left(\frac{|\partial\Omega|_f}{|\Omega|_f}\right)^{m-1}
\int_{\partial\Omega}\xi\left[mH_f-(m-1)\frac{|\partial\Omega|_f}{|\Omega|_f}\right]d\mu_{\partial\Omega}=0
\]
for any given $\xi\in C^\infty(\partial\Omega)$. Hence,
\begin{equation}\label{mecur2}
\frac{H_f}{m-1}=\frac{1}{m}\frac{|\partial\Omega|_f}{|\Omega|_f}.
\end{equation}
Substituting this into Theorem \ref{Ricmf} immediately gives \eqref{isoineq2}
by using $|\mathbb{S}^{m-1}|=m|\mathbb{B}^m|$.

Below we analyze the equality case of \eqref{isoineq2}. On the one hand, since
$(\overline{\Omega},g,e^{-f}dv)$ has $\mathrm{Ric}^m_f\ge 0$ and \eqref{mecur2},
by Proposition \ref{inteinequ}, we know that $m=n$, $\mathrm{Ric}\ge 0$, $f$
is constant in $\overline{\Omega}$, and $\overline{\Omega}$ is isometric to
a closed Euclidean ball $\overline{\mathbb{B}}^n(r_0)$. On the other hand,
the equality of \eqref{isoineq2} and \eqref{mecur2} imply
$\mathrm{AVR}^m_f(g)>0$, $H_f$ is positive constant and the equality of
\eqref{Will}. We also assume $M^n$ has only one end. Putting these information
together, we apply Theorem \ref{Ricmf} to conclude that $(M^n\backslash\Omega,g)$
is isometric to the warped product manifold given by \eqref{warprod2}.
\end{proof}

In Theorems \ref{iso1} and \ref{iso2}, if $f$ is constant, our results
return to the manifold case of \cite{Ro}. Besides, the proof of Theorem \ref{iso2}
indicates that a compact hypersurface embedded in a weighted manifold admitting
a critical point of the weighted isoperimetric functional implies that $H_f$
is a positive constant (see \eqref{mecur2}). Combining this fact with
Proposition \ref{inteinequ}, we indeed get that
\begin{corollary}\label{corol}
Let $(M^n,g,e^{-f}dv)$ ($n\ge3$) be a complete noncompact weighted
$n$-manifold with $\mathrm{Ric}^m_f\ge 0$, and $\Omega\subset M^n$ a
bounded open set with smooth boundary $\partial\Omega$. If the compact
set $\overline{\Omega}$ is a critical point of the weighted isoperimetric
functional
\[
G\to\frac{|\partial G|_f^m}{|G|_f^{m-1}},
\]
where $G$ is a compact domain with smooth boundary $\partial G$,
then $\Omega$ is isometric to an Euclidean $n$-ball.
\end{corollary}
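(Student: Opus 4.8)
The plan is to bypass the Willmore inequality entirely and obtain the rigidity directly from the first variation of the weighted isoperimetric functional together with the weighted Heintze--Karcher inequality (Proposition \ref{inteinequ}). The key observation is that the critical point condition already forces the weighted Heintze--Karcher inequality to hold with equality on $\overline{\Omega}$, so that the rigidity clause of Proposition \ref{inteinequ} applies and pins down $\overline{\Omega}$ as a Euclidean ball.

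First I would carry out the first variation computation exactly as in the proof of Theorem \ref{iso2}. Since $\overline{\Omega}$ is a critical point of $G\mapsto |\partial G|_f^m/|G|_f^{m-1}$, taking a normal variation $\psi(x,t)=\exp_x(-t\xi(x)\nu(x))$ and using the first variational formulas $A_f'(0)=\int_{\partial\Omega}\xi H_f\,d\mu_{\partial\Omega}$ and $V_f'(0)=\int_{\partial\Omega}\xi\,d\mu_{\partial\Omega}$ gives, for every $\xi\in C^\infty(\partial\Omega)$,
\[
\int_{\partial\Omega}\xi\left[mH_f-(m-1)\frac{|\partial\Omega|_f}{|\Omega|_f}\right]d\mu_{\partial\Omega}=0,
\]
whence \eqref{mecur2}, namely $H_f=\frac{m-1}{m}\,|\partial\Omega|_f/|\Omega|_f$. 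In particular $H_f$ is a positive constant on $\partial\Omega$, so that Proposition \ref{inteinequ} applies to the compact weighted manifold $(\overline{\Omega},g,e^{-f}dv)$ with boundary $\Sigma=\partial\Omega$.

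Next I would simply evaluate the Heintze--Karcher integral. Using \eqref{mecur2},
\[
\int_{\Sigma}\frac{1}{H_f}\,d\mu_{\Sigma}=\frac{1}{H_f}|\partial\Omega|_f
=\frac{m}{m-1}\frac{|\Omega|_f}{|\partial\Omega|_f}\cdot|\partial\Omega|_f
=\frac{m}{m-1}|\Omega|_f=\frac{m}{m-1}|\overline{\Omega}|_f,
\]
so that the inequality \eqref{ros} of Proposition \ref{inteinequ} is in fact an equality. The rigidity clause of Proposition \ref{inteinequ} then yields $m=n$, that $f$ is constant on $\overline{\Omega}$, and that $\overline{\Omega}$ is isometric to a closed Euclidean $n$-ball; hence $\Omega$ is isometric to an open Euclidean $n$-ball, as claimed.

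I do not expect a serious obstacle here: the entire content is the observation that the critical point condition saturates the weighted Heintze--Karcher inequality. The only point requiring a little care is to confirm the hypotheses of Proposition \ref{inteinequ}, namely that $H_f$ is everywhere positive (guaranteed by \eqref{mecur2} together with the positivity of $|\partial\Omega|_f$ and $|\Omega|_f$) and that $\mathrm{Ric}^m_f\ge0$ on $\overline{\Omega}$ (inherited from $M^n$). Notably, this argument needs neither Theorem \ref{Ricmf} nor any assumption on the number of ends of $M^n$, since Proposition \ref{inteinequ} concerns only the interior region $\overline{\Omega}$; the connectedness of $\partial\Omega$ is likewise a consequence rather than a hypothesis.
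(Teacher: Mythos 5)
Your proposal is correct and is essentially the paper's own argument: the paper derives Corollary \ref{corol} by noting that the critical point condition yields the positive constant value of $H_f$ in \eqref{mecur2}, which (exactly as you compute) saturates the weighted Heintze--Karcher inequality \eqref{ros}, so the rigidity clause of Proposition \ref{inteinequ} gives $m=n$, $f$ constant, and $\overline{\Omega}$ a Euclidean ball. Your explicit remark that neither Theorem \ref{Ricmf} nor the one-end hypothesis is needed matches the paper's treatment, which likewise invokes only the first variation and Proposition \ref{inteinequ} for this corollary.
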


\

\textbf{Acknowledgements}.
The authors  sincerely thank the anonymous referee for pointing out a mistake
in Lemma \ref{lem}, many inaccurate statements and mini errors in preliminary
version of the paper, and making valuable suggestions which helped to improve
the presentation of this work. G.-Q. Wu's research is supported by Natural
Science Foundation of Zhejiang Province (No. LY23A010016), the Fundamental
Research Funds of Zhejiang Sci-Tech University (No. 24062095-Y), and the
Open Research Fund of Key Laboratory of Analytical Mathematics and Applications
(Fujian Normal University)(No. JAM2401). J.-Y. Wu's research is supported by Natural
Science Foundation of China (No. 12571144).

%55555555555555555555555555555555555555555555555555555555555555555555555555555555555555555555555555555555555555555555555555555555555555555555555555555555555555

\end{document}